\numberwithin{equation}{section} \theoremstyle{plain}
\newtheorem{theorem}{Theorem}[section]
\newtheorem{lemma}[theorem]{Lemma}
\newtheorem{corollary}[theorem]{Corollary}
\newtheorem{proposition}[theorem]{Proposition}
\theoremstyle{definition}
\newtheorem{definition}[theorem]{Definition}
\theoremstyle{remark}
\def\os{{\cal O}}
\def\qco{\mathfrak{Qcoh}}
\newcommand{\Hom}{\operatorname{Hom}}
\newcommand{\Filt}{\operatorname{Filt}}
\newcommand{\Ext}{\operatorname{Ext}}
\newcommand{\id}{\operatorname{id}}
\newcommand{\Ker}{\operatorname{Ker}}
\newcommand{\res}{\operatorname{res}}
\newcommand{\Ann}{\operatorname{Ann}}
\newcommand{\Sum}{\operatorname{\mathbf{Sum}}}
\renewcommand{\theenumi}{\roman{enumi}}
\renewcommand{\labelenumi}{(\theenumi)}
\begin{document}
\pagestyle{myheadings}

\enlargethispage{\baselineskip}
\title{Locally torsion-free quasi--coherent sheaves
\thanks{2010 {\it Mathematics Subject
Classification}.13D30,18E40,18F20 (primary),14F05,18A30 (secondary).}
\thanks {{\it Keywords}.torsion-free quasi--coherent sheaf, integral scheme, torsion theory, cover}}

\date{}
\author{  Sinem Odaba\c{s}{\i}\footnote{Departamento de Matem\'{a}ticas, Facultad de Matem\'{a}ticas, Universidad de Murcia, Murcia, Spain.
e-mail: \texttt{sinem.odabasi1@um.es}. } }

 \maketitle
\renewcommand{\theenumi}{\arabic{enumi}}
\renewcommand{\labelenumi}{\emph{(\theenumi)}}

\begin{abstract}
Let $X$ be an arbitrary scheme. The category $\qco(X)$ of quasi--coherent sheaves on $X$ is known that admits arbitrary direct products. However their structure seems to be rather mysterious. In the present paper we will describe the structure of the product object of a family of locally torsion-free objects in $\qco(X)$, for $X$ an integral scheme. Several applications are provided. For instance it is shown that the class of flat quasi--coherent sheaves on a Dedekind scheme $X$ is closed under arbitrary direct products, and that the class of all locally torsion-free quasi--coherent sheaves induces a hereditary torsion theory on $\qco(X)$. Finally torsion-free covers are shown to exist in $\qco(X)$.

\vspace{0.5cm}

\end{abstract}

\section{Introduction}
The  class of flat quasi--coherent sheaves on a scheme
$X$ has been extensively used during the last years, as a natural choice
for studying both the homotopy category and the derived category of
quasi--coherent sheaves (\cite{EGPT,Gillespie,Hovey,MurSal,Murfet,HosSal}).

On the other hand Gabber showed (see \cite[Lemma 2.1.7]{conrad} for a reference and \cite{relativehom} for a proof)
that the
category of quasi--coherent sheaves on an arbitrary scheme admits a family of
generators in the sense of \cite{Grothendieck}. Therefore this category has enough
injectives and direct products always exist on it. However it seems to be a hard task
to
know an explicit description of this object. This is partially so because, at
the level of sections, the direct product of modules is not well-behaved in
general with respect to localizations, or more generally, when tensoring by an
arbitrary module with respect to a commutative ring (direct products do not
commute with tensoring in general). But even in case that the tensor product
does commute with products with respect to finitely presented modules
(for instance when the ring $R$ is coherent) it is not clear whether the product
object in $\qco(X)$ can be computed from the product module of sections at each
affine open if we do not impose extra assumptions on the sheaf  of rings
${\mathcal O}_X$ attained to $X$ (for instance if ${\mathcal O}_X(U)$ is
finitely presented as ${\mathcal O}_X(V)$-module, for each affine open subsets
$U\subseteq V$).

The lack of an explicit description of the product object leads to
new and relevant questions on the class ${\rm Flat}(X)$ of {\it flat} quasi-coherent sheaves on $X$. For instance, Murfet in \cite[Remark B.7]{Murfet}
raises the question of whether ${\rm Flat}(X)$ is closed under products, for $X$ a noetherian scheme.
This property is crucial to showing that in ${\rm Ch}(A)$, the category of unbounded chain complexes of $A$-modules ($A$ commutative noetherian ring), the complex $Hom_A(I,I')$ is a complex of flat modules, for injectives $I,I'\in {\rm Ch}(A)$. We point out that the usual notion of flatness in $\qco(X)$ is not categorical, as it shown in \cite{ES}. Recently Saor\'{\i}n and \v{S}\v{t}ov\'{\i}\v{c}ek \cite[4.2]{Saorin} have given a positive answer to this question for Dedekind schemes. In their argument they use Crawley-Boevey's
characterization of preenveloping subcategories of the category of finitely presented objects in a locally finitely presented additive category with products (see \cite[Theorem 4.2]{cb}. So then they show that if $X$ is Dedekind, the category of finitely presented objects in $\qco(X)$ (the vector bundles) is preenveloping, obtaining as a byproduct that  its closure under direct limits, the class ${\rm Flat}(X)$, is closed under products.

If $X$ is affine, there is a canonical equivalence between ${\rm Flat}(X)$
and the class ${\rm Flat}(R)$ of flat $R$-modules, where $X=Spec(R)$. Now if $X$
is also Dedekind it is well known that ${\rm Flat}(R)$ coincides with the class
of torsion-free $R$-modules. So, for an arbitrary scheme, it makes sense to
define the class $\mathscr{F}$ of {\it locally torsion-free }
quasi--coherent sheaves as the class of $\mathcal F\in \qco(X)$ such that $\mathcal{F}(U)$ is a torsion-free $\mathcal{O}_X(U)$-module, for each affine open set $U$. This class contains ${\rm Flat}(X)$ in general, and indeed
it coincides with it for Dedekind schemes. Thus this paper is devoted to study the class $\mathscr{F}$. More precisely, in the first part
we will characterize the product object of a family of quasi--coherent sheaves
in $\mathscr{F}$ obtaining, as a consequence, the forementioned result of
Saor\'{\i}n and \v{S}\v{t}ov\'{\i}\v{c}ek for Dedekind schemes. The main result of this section is the following (Theorem \ref{prodtf}).

\medskip\par\noindent
{\bf Theorem I.} Let $X$ be an integral scheme. The direct product $\mathcal{F}$ of a family $\{\mathcal{F}_i\}_{i\in I}$ of torsion-free quasi-coherent sheaves in $\qco(X)$ is the largest quasi-coherent subsheaf of $\prod_{i\in I} \mathcal{F}_i$. More concretely, it is  of the form
$$\mathcal{F}=\sum_{\substack{\mathcal{M}\in \qco(X)\\\mathcal{M} \subseteq \prod_{i\in I} \mathcal{F}_i}} \mathcal{M}.$$
\bigskip\par

One of the consequences of this theorem is that for an integral scheme the class
$\mathscr{F}$ induces a hereditary torsion theory in the sense of \cite{dickson} in $\qco(X)$ (see also \cite{blas} for an extensive study about torsion theories in $\mathfrak{Sh}(X)$ (the category of sheaves of $\mathcal{O}_X$-modules) and $\qco(X)$).

Flat covers are shown that exist in \cite[Theorem 4.1]{relativehom}. In the second part of the paper (Section \ref{tfcover}) we show the existence of covers with respect to the class $\mathscr{F}$. This was known from the sixties in case $X$ is integral and affine. The result is due to \cite{torsionfree} (see also \cite{teply} for a more general version for arbitrary torsion theories). Thus the main theorem of this section (Theorem \ref{tfcover}) states:

\medskip\par\noindent
{\bf Theorem II.} Each quasi--coherent sheaf on an integral scheme has a locally torsion-free cover.

\section{Preliminaries}
\begin{definition}
Let $\mathcal{C}$ be a Grothendieck category. A  direct system of objects of $\mathcal{C}$, $(M_\alpha \mid \alpha \leq \lambda)$, is said to be a continuous system of monomorphisms if $M_0=0$, $M_\beta= \varinjlim _{\alpha< \beta} M_{\alpha}$ for each limit ordinal $\beta \leq \lambda$ and all the morphisms in the system are monomorphisms.

Let $\mathcal{S}$ be a class of objects which is closed under isomorphisms. An object $M$ of $\mathcal{C}$ is said to be $\mathcal{S}$-filtered if there is a continuous system $(M_\alpha \mid \alpha \leq \lambda)$ of subobjects of $M$ which is $M=M_{\lambda}$ and $M_{\alpha+1}/M_{\alpha}$ is isomorphic  to an object of $\mathcal{S}$ for each $\alpha< \lambda$.
\end{definition}

The class of $\mathcal{S}$-filtered objects in $\mathcal{C}$ is denoted by $\Filt(\mathcal{S})$. The relation $\mathcal{S} \subseteq \Filt(\mathcal{S})$ always holds. In the case of being $\Filt(\mathcal{S}) \subseteq \mathcal{S}$, the class $\mathcal{S}$ is said to be closed under $\mathcal{S}$-filtrations.

\begin{definition}
Let $\mathscr{F}$ be a class of objects of $\mathcal{C}$. A morphism $\phi : F \rightarrow M$ of $\mathcal{C}$ is said to be an $\mathscr{F}$-precover of $M$ if $F \in \mathscr{F}$ and if $\Hom(F',F)\rightarrow \Hom (F',M) \rightarrow 0$ is exact for every $F' \in \mathscr{F}$. If any morphism $f:F \rightarrow F$ such that $ \phi \circ f= \phi$ is an isomorphism, then it is called an $\mathscr{F}$-cover of $M$. If the class $\mathscr{F}$ is such that every object has an $\mathscr{F}$-cover, then $\mathscr{F}$ is called a precovering class. The dual notions are those of $\mathscr{F}$-envelope and enveloping class.
\end{definition}

\begin{definition}
A torsion theory  for a category $ \mathcal{C}$ is a pair $(\mathscr{T},\mathscr{F})$ of classes of objects $\mathcal{C}$ such that
\begin{enumerate}
\item $\Hom (T,F)=0 $ for all $T\in \mathscr{T}, F \in \mathscr{F}$.
\item If $\Hom (C,F)=0$ for all $F \in \mathscr{F}$, then $C \in \mathscr{T}$.
\item If $\Hom (T,C)=0$ for all $T \in \mathscr{T}$, then $C \in \mathscr{F}$.
\end{enumerate}
In that case, $\mathscr{T}$ is called a \emph{torsion class} and while $\mathscr{F}$ is called a \emph{torsion-free} class.
\end{definition}

In fact, being a torsion class is equivalent to being closed under quotient objects, coproducts and extensions. And its dual form is valid for a torsion-free class.

A torsion theory $(\mathscr{T},\mathscr{F})$ is called \emph{hereditary} if the torsion class is closed under subobjects, or equivalently, the torsion-free class is closed under injective envelopes. And  it is called of finite type when its torsion-free class is closed under direct limits. The prototypical example of a hereditary torsion theory of finite type comes from the category of modules over an integral domain where $\mathscr{T}$ is the class of all torsion modules and  $\mathscr{F}$ is the class of all torsion-free modules.

\section{Locally torsion-free quasi-coherent sheaves}
A scheme $(X,\os_X)$ is said to be {\it integral} if $\os_X(U)$ is an integral domain, for each open subset $U$ of $X$, or equivalently, if it is both reduced and irreducible scheme. Since the properties of being reduced and irreducible scheme are local, it may be reduced to the level of affine sets, that is,  $\os_X(U)$ is an integral domain, for each open affine subset $U\subset X$. From now on all schemes are assumed to be integral.

We start this section by proving that locally torsion quasi--coherent sheaves are easily shown to induce a torsion theory in $\qco(X)$:

\begin{proposition}\label{torsion}
Let $\mathscr{T}$ be the class of quasi-coherent sheaves over $X$ whose image on an affine open subset $U$ is torsion. Then $\mathscr{T}$ is a torsion class of a hereditary torsion theory in $\qco(X)$.
\end{proposition}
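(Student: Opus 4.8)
The plan is to reduce the (a priori affine-local, and somewhat ambiguously quantified) torsion condition to the vanishing of a single stalk, namely the stalk at the generic point, and then to extract every closure property from the exactness of that stalk functor. Since $X$ is integral it is irreducible, so it has a unique generic point $\eta$, and the local ring $\os_{X,\eta}$ is the function field $K$ of $X$, a field. For any nonempty affine open $U=\Spec R$ (every nonempty open contains $\eta$) the point $\eta$ corresponds to the zero ideal of the domain $R$, so for $\mathcal{F}\in\qco(X)$ one has $\mathcal{F}_\eta\cong \mathcal{F}(U)_{(0)}\cong \mathcal{F}(U)\otimes_R K$. A module over a domain is torsion exactly when its localization at the zero ideal vanishes, whence
\[
\mathcal{F}\in\mathscr{T}\quad\Longleftrightarrow\quad \mathcal{F}_\eta=0 .
\]
Because $\mathcal{F}_\eta$ is independent of the chosen affine open, this already shows that testing $\mathcal{F}(U)$ on one affine open is the same as testing it on all of them, so the definition of $\mathscr{T}$ is unambiguous.

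With this characterization I would observe that the stalk functor $(-)_\eta$ from $\qco(X)$ to the category of $K$-vector spaces is exact and commutes with coproducts. Exactness is the standard fact that stalks of sheaves of $\os_X$-modules are exact; it restricts to $\qco(X)$ because the inclusion $\qco(X)\hookrightarrow \os_X\text{-modules}$ is exact, kernels, cokernels and extensions of quasi-coherent sheaves being again quasi-coherent. Likewise the coproduct in $\qco(X)$ is the direct sum of the underlying $\os_X$-modules, and the stalk of a direct sum is the direct sum of the stalks. Thus $\mathscr{T}=\Ker\bigl((-)_\eta\bigr)$ is precisely the kernel of an exact, coproduct-preserving functor.

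From this all the required closures are immediate. Applying $(-)_\eta$ to a short exact sequence $0\to\mathcal{F}'\to\mathcal{F}\to\mathcal{F}''\to 0$ in $\qco(X)$ gives an exact sequence of $K$-vector spaces, whence $\mathcal{F}_\eta=0$ when $\mathcal{F}'_\eta=\mathcal{F}''_\eta=0$ (extensions), $\mathcal{F}''_\eta=0$ when $\mathcal{F}_\eta=0$ (quotients), and $\mathcal{F}'_\eta=0$ when $\mathcal{F}_\eta=0$ (subobjects); and $\bigl(\bigoplus_i\mathcal{F}_i\bigr)_\eta\cong\bigoplus_i(\mathcal{F}_i)_\eta$ gives closure under coproducts. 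By the characterization of torsion classes recalled in Section~2 (closure under quotient objects, coproducts and extensions), $\mathscr{T}$ is the torsion class of a torsion theory in $\qco(X)$, and its additional closure under subobjects makes that theory hereditary.

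I expect the only point requiring care to be the verification that the categorical operations entering the definition of a (hereditary) torsion class — subobjects, quotients, extensions and coproducts computed inside $\qco(X)$ — coincide with the corresponding operations on $\os_X$-modules, so that stalk-exactness may be invoked verbatim; this rests entirely on quasi-coherence being preserved by these constructions. The generic-point reduction is what keeps the argument short, since it trades a condition quantified over all affine opens for the vanishing of a single stalk.
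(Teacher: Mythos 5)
Your proof is correct, and it reaches the conclusion by the same final step as the paper --- verifying that $\mathscr{T}$ is closed under quotients, coproducts, extensions and subobjects, and then invoking the characterization of (hereditary) torsion classes recalled in Section~2 --- but the way you establish those closure properties is genuinely different. The paper's proof simply asserts the four closure properties as evident and moves on; you instead identify $\mathscr{T}$ with the kernel of the generic-stalk functor $(-)_\eta$, using that $X$ is integral so that $\os_{X,\eta}$ is the function field and $\mathcal{F}_\eta\cong\mathcal{F}(U)\otimes_R\operatorname{Frac}(R)$ for any nonempty affine open $U=\Spec R$. This buys you three things at once: all four closure properties drop out of the exactness and coproduct-preservation of a single functor rather than requiring four separate affine-local checks; the quantifier ambiguity in the statement (``an affine open subset'' --- some versus all) is resolved, since vanishing of the one stalk $\mathcal{F}_\eta$ is equivalent to $\mathcal{F}(U)$ being torsion for every nonempty affine open $U$; and the ``hereditary'' part (closure under subobjects) is exactly as immediate as the rest. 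You are also right to flag, as the one point needing care, that kernels, cokernels, extensions and coproducts in $\qco(X)$ are computed as in $\os_X$-modules --- this is standard for these particular operations, and it is worth noting that it is precisely the failure of the analogous statement for \emph{products} that motivates the rest of the paper. The only cost of your route is that it leans on the generic point and hence on irreducibility, whereas the direct affine-local verification the paper has in mind works open-by-open; but since the whole paper assumes $X$ integral, nothing is lost.
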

\begin{proof}
Since $\mathscr{T}$ is closed under extensions, quotients, coproducts and subobjects, it is a torsion part of the hereditary torsion theory $(\mathscr{T},\mathscr{F}_{\mathscr{T}})$ where $\mathscr{F}_{\mathscr{T}}$ consists of the $\mathcal{M}\in \qco(X)$ having just the zero morphism from each element of $\mathscr{T}$.
\end{proof}

Now let $\mathscr{F}$ be the class in $\qco(X)$ of {\it locally torsion-free quasi-coherent sheaves}, that is, $\mathcal{F}\in \mathscr{F}$ whenever $\mathcal{F}(U)$ is torsion-free $\mathcal{O}_X(U)$-module, for each affine open set $U$ in $X$. First of all, we claim that being  locally torsion-free is a Zariski-local notion in $\qco(X)$.
\begin{lemma}\label{local}
Let $R$ be an integral domain and $M$ be an $R$-module. Then the following are equivalent:
\begin{enumerate}
\item  $M$ is torsion-free.
\item  $M_P$ is torsion-free as $R_P$-module for each prime ideal $P$.
\item  $M_m$ is torsion-free as $R_m$-module for each maximal ideal $m$.
\end{enumerate}
\end{lemma}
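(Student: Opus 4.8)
The plan is to prove the cycle of implications (1) $\Rightarrow$ (2) $\Rightarrow$ (3) $\Rightarrow$ (1), using throughout the torsion submodule $T(N) = \{x \in N : rx = 0 \text{ for some } 0 \neq r \in R\}$ of an $R$-module $N$, so that torsion-freeness of $N$ is exactly the statement $T(N) = 0$. The central elementary fact, valid because $R$ is an integral domain, is that the localization map $R \to R_P$ is injective for every prime $P$; hence a nonzero element of $R$ stays nonzero in every $R_P$, and the fraction field of $R_P$ is again the fraction field $K$ of $R$.

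For (1) $\Rightarrow$ (2), I would take a torsion element $x/s \in M_P$ killed by some $r/t$ with $r/t \neq 0$, so that $r \neq 0$ in $R$. Unwinding the definition of the localized module, $rx/(st) = 0$ in $M_P$ yields an element $u \notin P$ with $urx = 0$ in $M$. Since $R$ is a domain and $u, r \neq 0$, the product $ur$ is a nonzero element of $R$ annihilating $x$; torsion-freeness of $M$ then forces $x = 0$, whence $x/s = 0$. The implication (2) $\Rightarrow$ (3) is immediate, as every maximal ideal is prime.

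The main work is (3) $\Rightarrow$ (1), and here I would argue via the local--global principle applied to $T(M)$. The claim is that localization respects the torsion submodule in the sense that $T(M)_m \subseteq T(M_m)$ for every maximal ideal $m$: if $x \in T(M)$ is annihilated by some $0 \neq r \in R$, then $x/1 \in M_m$ is annihilated by $r/1$, which is nonzero because $R \to R_m$ is injective, so $x/1 \in T(M_m)$. By hypothesis $M_m$ is torsion-free, i.e.\ $T(M_m) = 0$, and therefore $T(M)_m = 0$ for every maximal ideal $m$. A module all of whose localizations at maximal ideals vanish is itself zero, so $T(M) = 0$ and $M$ is torsion-free.

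The only point requiring care --- and the step I expect to be the main obstacle in making the argument airtight --- is precisely the interaction between the torsion submodule and localization: one must verify that passing a relation $rx = 0$ (or $urx = 0$) to a localization never accidentally kills the annihilator, which is exactly where the hypothesis that $R$ is a domain, so that products of nonzero elements are nonzero and $R \hookrightarrow R_P$, is indispensable. Once this is secured, the local--global vanishing principle closes the cycle.
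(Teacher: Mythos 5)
Your proof is correct and follows essentially the same route as the paper: the implications $(1)\Rightarrow(2)\Rightarrow(3)$ are routine, and your $(3)\Rightarrow(1)$ is the paper's argument repackaged, since the local--global vanishing principle you invoke for $T(M)$ is proved exactly by the paper's step of localizing at a maximal ideal containing the annihilator of a putative nonzero torsion element. The point you flag as delicate --- that $R\hookrightarrow R_m$ and products of nonzero elements stay nonzero, so the annihilating scalar survives localization --- is precisely the point the paper also isolates, so nothing is missing.
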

\begin{proof}
 The implications $(1) \Rightarrow (2) \Rightarrow (3)$ are easy. For $(3)\Rightarrow (1)$, assume that there is  a nonzero torsion  element $x \in M$. Then the ideal $\Ann _R (x)=\{r\in R \mid \quad rx=0\}$ is neither zero nor $R$ since $x$ is not zero. Consider  the maximal ideal $m$ containing $\Ann_R(x)$. Then $\frac{m}{1}$ is not zero in $M_m$. But $\frac{r}{1}.\frac{m}{1}=0$ for any nonzero $r \in \Ann_R(x)$. By the assumption, $\frac{r}{1}=0$ in $R_m$, that is, $tr=0$ for some $t \in R \backslash m$. But since $r \neq 0$ and $R$ is an integral domain, $t=0$ which  yields to a contradiction.
\end{proof}

\begin{proposition}
Let $\mathcal{F}$ be a quasi-coherent sheaf over $X$. Then the following are equivalent:
\begin{enumerate}
\item $\mathcal{F} \in \mathscr{F}$.
\item There is a cover $\mathcal{U}$ of $X$ containing affine open subsets whose images under $\mathcal{F}$ are torsion-free.
\item $\mathcal{F}_x$ is torsion-free for each $x \in X$.
\end{enumerate}
\end{proposition}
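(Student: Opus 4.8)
The plan is to prove the proposition by establishing the cycle of implications $(1) \Rightarrow (2) \Rightarrow (3) \Rightarrow (1)$, using Lemma \ref{local} as the main engine together with the fact that quasi-coherence allows one to compute stalks as localizations of sections. The underlying point is that ``locally torsion-free'' is a Zariski-local notion, and Lemma \ref{local} is precisely the affine-level statement that makes this descent work. Throughout I would use that, since $X$ is integral, the ring $\os_X(U)$ is an integral domain for every affine open $U$, so that Lemma \ref{local} applies at each affine piece.

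The implication $(1) \Rightarrow (2)$ is immediate: condition $(1)$ asserts that $\mathcal{F}(U)$ is torsion-free for \emph{every} affine open $U$, so any affine open cover of $X$ witnesses $(2)$. For $(2) \Rightarrow (3)$, I would fix a point $x \in X$ and choose $U = \Spec(R) \in \mathcal{U}$ containing it, where $R = \os_X(U)$ is a domain. Writing $M = \mathcal{F}(U)$, quasi-coherence gives $\mathcal{F}|_U \cong \widetilde{M}$, so the stalk $\mathcal{F}_x$ is the localization $M_{\mathfrak{p}}$ at the prime $\mathfrak{p}$ corresponding to $x$. Since $M$ is torsion-free by hypothesis, the implication $(1) \Rightarrow (2)$ of Lemma \ref{local} shows $M_{\mathfrak{p}}$ is torsion-free over $R_{\mathfrak{p}} = \os_{X,x}$, which is exactly $(3)$.

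For $(3) \Rightarrow (1)$, I would take an arbitrary affine open $U = \Spec(R)$ and set $M = \mathcal{F}(U)$; again $R$ is a domain. Each maximal ideal $m$ of $R$ corresponds to a closed point $x$ of $U \subseteq X$, and as above $\mathcal{F}_x = M_m$, which is torsion-free by $(3)$. Hence $M_m$ is torsion-free over $R_m$ for every maximal ideal $m$, and the implication $(3) \Rightarrow (1)$ of Lemma \ref{local} yields that $M$ is torsion-free over $R$. Since $U$ was arbitrary, this establishes $(1)$ and closes the cycle.

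The only step requiring care, rather than a genuine obstacle, is the identification of the stalk $\mathcal{F}_x$ with the localization $M_{\mathfrak{p}}$: this rests on the quasi-coherence of $\mathcal{F}$ together with the fact that stalks are computed locally on any open neighbourhood, so that the stalk taken in $X$ agrees with the one taken in the affine piece $U$. Once this translation is in place, each condition of the proposition becomes one of the module-theoretic conditions of Lemma \ref{local} applied at every affine open, and the equivalence follows formally.
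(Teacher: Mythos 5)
Your proposal is correct and follows essentially the same route as the paper: $(1)\Rightarrow(2)$ is immediate, $(2)\Rightarrow(3)$ uses quasi-coherence to identify the stalk $\mathcal{F}_x$ with the localization $\mathcal{F}(U)_P$, and $(3)\Rightarrow(1)$ reduces to Lemma \ref{local} applied on each affine open. The only cosmetic difference is that you invoke the maximal-ideal criterion (3) of Lemma \ref{local} while the paper uses all primes of $U$; both are covered by the lemma.
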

\begin{proof}
$(1)\Rightarrow (2)$ is clear. Suppose $(2)$. Let $x\in X$. Since $\mathcal{U}$ is a cover of $X$ with affine open sets, there is an affine open set $U \in \mathcal{U}$ containing $x$. Since $\mathcal{F}$ is quasi-coherent, $\mathcal{F}_x=M_P$ where $\mathcal{F}(U)=M$ is torsion-free and for some prime ideal $P \in U$ corresponding to $x$. This proves $(3)$. Now assume $(3)$. Let $U$ be an affine open set. By assumption, $\mathcal{F}_x=(\mathcal{F}\mid _U)_x =\mathcal{F}(U)_P$ is torsion-free for all $x \in U$. Hence Lemma \ref{local} implies $(1)$.
\end{proof}

Since the collection of all affine open subsets of $X$ constitutes a base, we can define the  image of each open subset of $X$ as an inverse limit of affine open subsets of $X$ that are contained in the open subset considered. This means that being torsion-free on each open affine subset of $X$ implies being torsion-free on each open subset of $X$.

As for the torsion theory in Proposition \ref{torsion}, it is easy to see that $\mathscr{F}$ is contained in $\mathscr{F}_{\mathscr{T}}$ of the previous proposition. We may also consider in $\mathfrak{Sh}(X)$, the category of sheaves of $\mathcal{O}_X$-modules, the pair $(\overline{\mathscr{T}},\overline{\mathscr{F}})$ of locally torsion and locally torsion-free $\os_X$-modules. This is easily shown to be a torsion theory in $\mathfrak{Sh}(X)$. However it is not clear if this pair constitutes a torsion theory when we restrict it to $\qco(X)$. In pursuing this aim, we will focus on the elements of the class $\mathscr{F}$. It is not difficult to see that $\mathscr{F}$ is closed under subobjects and extensions. But proving that $\mathscr{F}$ is closed under products requires more work and will be the main goal of this section. We start with the following:

\begin{lemma}\label{monic}
If $\mathcal{F}$ is a quasi-coherent sheaf over $X$ which is in the class $\mathscr{F}$, then its restriction maps between affine open subsets are monomorphisms.
\end{lemma}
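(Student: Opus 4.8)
The plan is to factor every restriction map through the stalk at the generic point, where torsion-freeness turns into a direct injectivity statement. Since $X$ is integral it is irreducible, so it possesses a unique generic point $\eta$, and every nonempty open subset of $X$ contains $\eta$. Moreover the stalk $\mathcal{O}_{X,\eta}$ is a field $K$ (the function field of $X$), which is canonically the fraction field of $\mathcal{O}_X(U)$ for every affine open $U$; this is what lets me use a single object $\mathcal{F}_\eta$ as a common target for all the restriction maps.

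First I would fix an affine open set $U=\Spec R$ and write $M=\mathcal{F}(U)$. Because $R$ is a domain, the generic point $\eta$ corresponds to the zero prime ideal of $R$, so that $\mathcal{F}_\eta = M_{(0)} = M\otimes_R K$. The canonical map $M \to M\otimes_R K$, $m \mapsto m\otimes 1$, has kernel precisely the torsion submodule of $M$: indeed $m\otimes 1=0$ in the localization $(R\setminus\{0\})^{-1}M$ exactly when $sm=0$ for some nonzero $s\in R$. Since $\mathcal{F}\in\mathscr{F}$ forces $M$ to be torsion-free, this kernel vanishes and the canonical map $\mathcal{F}(U)\to\mathcal{F}_\eta$ is a monomorphism.

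Now let $V\subseteq U$ be affine open subsets. Because $\eta\in V$, the two canonical maps to the stalk fit into a commutative triangle in which the restriction map $\mathcal{F}(U)\to\mathcal{F}(V)$ is followed by $\mathcal{F}(V)\to\mathcal{F}_\eta$, and the composite coincides with the canonical map $\mathcal{F}(U)\to\mathcal{F}_\eta$ shown injective in the previous step. An arrow whose post-composition with another arrow is injective is itself injective, so $\mathcal{F}(U)\to\mathcal{F}(V)$ is a monomorphism, which is exactly the assertion.

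The argument is short, and the only delicate points are foundational rather than hard: identifying the stalk at $\eta$ with the localization of sections at the zero ideal (a standard property of quasi--coherent sheaves on an integral scheme), and verifying that the fraction fields $\mathrm{Frac}(\mathcal{O}_X(U))$ all coincide with $K$, so that the factorization through the single object $\mathcal{F}_\eta$ is legitimate. The genuinely algebraic content—that a torsion-free module over a domain embeds into its rationalization—is immediate from the definition of torsion, so I expect no real obstacle beyond setting up the stalk identification cleanly.
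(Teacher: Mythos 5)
Your proof is correct, but it follows a genuinely different route from the paper's. The paper argues purely algebraically: it uses quasi-coherence to identify $\mathcal{F}(V)$ with $\mathcal{O}_X(V)\otimes_{\mathcal{O}_X(U)}\mathcal{F}(U)$, so that $\res_{UV}(x)=0$ becomes $1\otimes x=0$, and then invokes the equational criterion for flatness of $\mathcal{O}_X(V)$ over $\mathcal{O}_X(U)$ (Stenstr\"om, Prop.\ 8.8.I) to produce a column $A$ over $\mathcal{O}_X(U)$ with $Ax=0$ and a row $S$ over $\mathcal{O}_X(V)$ with $SA=1$; torsion-freeness forces $A=0$ and hence the contradiction $1=0$. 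You instead exploit the integrality of $X$ geometrically, embedding every $\mathcal{F}(U)$ into the generic stalk $\mathcal{F}_\eta=\mathcal{F}(U)\otimes_R K$ (injective exactly because $\mathcal{F}(U)$ is torsion-free over the domain $R$) and reading off injectivity of $\res_{UV}$ from the factorization through $\mathcal{F}_\eta$. Your argument is shorter and more conceptual, and as a bonus it exhibits all section modules as submodules of the single $K$-vector space $\mathcal{F}_\eta$ --- which is essentially what underlies the paper's subsequent remark that $\mathcal{F}(U)=\bigcap_i\mathcal{F}(U_i)$ for an affine cover. The paper's argument, on the other hand, uses only flatness of the restriction homomorphisms together with torsion-freeness, so it does not depend on the existence of a generic point. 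One shared, harmless omission: both proofs tacitly assume $V\neq\emptyset$ (for $V=\emptyset$ the restriction map is $\mathcal{F}(U)\to 0$, and in your argument $\eta\notin V$ while in the paper's $1=0$ holds in $\mathcal{O}_X(\emptyset)$), which is the standard convention here.
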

\begin{proof}
Let $V\subseteq U$ be affine open subsets and $\res_{UV}:
\mathcal{F}(U) \rightarrow \mathcal{F}(V)$ be its restriction map
between these affine open subsets. Suppose $0 \neq x \in
\mathcal{F}(U)$  such that $\res_{UV}(x)=0$. Then $ \id_ {\os_X(V)}
\otimes_{\os_X(U)} \res_{UV}(1 \otimes x)=0$. This implies that
$1\otimes x=0$ in $\os_X(V) \otimes_{\os_X (U)} \mathcal{F}(U)$.
Since $\os_X(V)$ is flat as $\os_X(U)$-module, as explained in
\citet[Proposition 8.8, I]{bo}, there exist a matrix $A_{n\times1}$
with coefficients from $\os_X(U)$ and a vector $S_{1 \times n}$
coefficients from $\os_X(V)$ such that $A.x=0$ and $1=S.A$. But
since $x$ is torsion-free and nonzero, $A=0$. This contradicts with
$1=S.A$.
\end{proof}

By Lemma \ref{monic} and the definition of sheaf, we can result that for every non-empty affine open subset $U\subseteq X$ and for every affine open covering $U=\bigcup_i U_i$, we have  $\mathcal{F}(U)=\bigcap_i\mathcal{F}(U_i)$.

In the next results we will analyze the interlacing between the product object in $\mathscr{F}$ in $\qco(X)$ and the product object in $\overline{\mathscr{F}}$ in $\mathfrak{Sh}(X)$.

\begin{proposition}\label{prodsub}
Let $\{\mathcal{F}_i\}_I$ be a family of torsion-free quasi-coherent sheaves over $X$. Its  direct product $\mathcal{F}$ in $\qco (X)$ is a subsheaf of $\prod_I \mathcal{F}_i$.
\end{proposition}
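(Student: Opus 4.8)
The plan is to exhibit the canonical comparison morphism from the categorical product $\mathcal{F}$ in $\qco(X)$ to the sectionwise product $\prod_I\mathcal{F}_i$ in $\mathfrak{Sh}(X)$ and to prove that it is a monomorphism by showing that its kernel vanishes. Write $p_i\colon \mathcal{F}\to\mathcal{F}_i$ for the projections of the product in $\qco(X)$ and $\pi_i\colon \prod_I\mathcal{F}_i\to\mathcal{F}_i$ for the projections of the product in $\mathfrak{Sh}(X)$, the latter being computed open set by open set. Since $\mathcal{F}$ is in particular a sheaf of $\os_X$-modules, the universal property of $\prod_I\mathcal{F}_i$ in $\mathfrak{Sh}(X)$ yields a unique morphism $\phi\colon \mathcal{F}\to\prod_I\mathcal{F}_i$ with $\pi_i\circ\phi=p_i$ for all $i$. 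The assertion that $\mathcal{F}$ is a subsheaf of $\prod_I\mathcal{F}_i$ is precisely the statement that $\phi$ is a monomorphism, equivalently that $\mathcal{K}:=\Ker\phi=0$. Computing kernels sectionwise one sees at once that $\mathcal{K}=\bigcap_{i\in I}\Ker p_i$ as subsheaves of $\mathcal{F}$.

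The crux of the argument is to show that $\mathcal{K}$ is quasi-coherent. Here lies the main obstacle, and the only place where torsion-freeness is genuinely used: an arbitrary intersection of quasi-coherent subsheaves need not be quasi-coherent, because localization does not commute with infinite intersections of submodules in general. I would fix an affine open $U=\Spec R$ and set $M:=\mathcal{F}(U)$ and $N_i:=(\Ker p_i)(U)=\Ker(p_{i,U})\subseteq M$; note that each $\Ker p_i$ is quasi-coherent, being the kernel of a morphism between quasi-coherent sheaves, so $(\Ker p_i)(D(f))=(N_i)_f$ for every $f\in R$. Now $M/N_i\cong\im(p_{i,U})$ embeds into the torsion-free module $\mathcal{F}_i(U)$, hence is itself torsion-free over the domain $R$. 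Consequently, for $0\neq f\in R$ and $x\in M$, the relation $x/1\in(N_i)_f$ forces $f^n x\in N_i$ for some $n$, and torsion-freeness of $M/N_i$ then yields $x\in N_i$ already. This is exactly what is needed to conclude $\bigcap_{i}(N_i)_f=\big(\bigcap_i N_i\big)_f$, so that $\mathcal{K}(D(f))=\bigcap_i(N_i)_f=\mathcal{K}(U)_f$. Thus $\mathcal{K}|_U$ is the quasi-coherent sheaf associated with $\bigcap_i N_i$, and since $U$ was an arbitrary affine open, $\mathcal{K}$ is quasi-coherent.

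Finally I would conclude by the universal property of the product in $\qco(X)$. The inclusion $\iota\colon\mathcal{K}\hookrightarrow\mathcal{F}$ is now a morphism in $\qco(X)$, and by construction $p_i\circ\iota=0$ for every $i\in I$. Since the projections $p_i$ identify $\Hom_{\qco(X)}(\mathcal{K},\mathcal{F})$ with $\prod_i\Hom_{\qco(X)}(\mathcal{K},\mathcal{F}_i)$, the morphism $\iota$ is determined by its vanishing components and must therefore be the zero morphism; as $\iota$ is a monomorphism this gives $\mathcal{K}=0$, whence $\phi$ is a monomorphism and $\mathcal{F}$ is a subsheaf of $\prod_I\mathcal{F}_i$. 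I expect the quasi-coherence of $\mathcal{K}$ to be the real content of the proof, the passage from ``zero quasi-coherent kernel'' to ``zero kernel'' being automatic once torsion-freeness has been used to keep the kernel inside $\qco(X)$.
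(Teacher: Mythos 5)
Your proposal is correct, and its overall skeleton coincides with the paper's: form the canonical comparison morphism into the sheafwise product, prove its kernel is quasi-coherent, and then kill the kernel via the universal property of the product in $\qco(X)$. Where you genuinely diverge is in the middle step. The paper establishes quasi-coherence of $\Ker\alpha$ by tensoring the whole kernel diagram with $\os_X(V)$ over $\os_X(U)$ for affine opens $V\subseteq U$, invoking flatness of $\os_X(V)$ together with Lemma \ref{monic} (injectivity of the restriction maps of each $\mathcal{F}_i$, hence of $\prod_i\mathcal{F}_i(U)\to\prod_i\mathcal{F}_i(V)$, preserved under the flat base change), and then running a diagram chase to exhibit the inverse isomorphism $\Ker\alpha_V\to\os_X(V)\otimes_{\os_X(U)}\Ker\alpha_U$. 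You instead describe the kernel as the (possibly infinite) intersection $\bigcap_i\Ker p_i$ of quasi-coherent subsheaves and observe that each $N_i=\Ker p_{i,U}$ is saturated under localization because $M/N_i$ embeds in the torsion-free module $\mathcal{F}_i(U)$; this is exactly what makes localization commute with this particular infinite intersection, and your verification of $\bigl(\bigcap_iN_i\bigr)_f=\bigcap_i(N_i)_f$ is sound. Your route is more elementary and arguably more transparent: it needs neither Lemma \ref{monic} nor the flat-base-change bookkeeping, and it isolates precisely where torsion-freeness enters (saturation of the kernels) and why arbitrary intersections of these particular quasi-coherent subsheaves stay quasi-coherent. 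What the paper's version buys in exchange is uniformity with its other arguments, since Lemma \ref{monic} and the tensor characterization of quasi-coherence are reused throughout, and it yields the explicit isomorphism $\os_X(V)\otimes_{\os_X(U)}\Ker\alpha_U\cong\Ker\alpha_V$ for all affine pairs rather than only the $D(f)\subseteq U$ criterion (though the latter suffices).
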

\begin{proof}
By definition  of the direct product, there is a unique morphism
$\alpha: \mathcal{F} \rightarrow \prod \mathcal{F}_i$. Then we need
to show that $\Ker \alpha$ is the zero sheaf. Note that $(\Ker
\alpha)(U)= \Ker \alpha_U$ for every open subset $U$. Firstly, we
will prove that $\Ker \alpha \in \qco(X)$. Consider the
following diagram with the one tensorized by $\os_X(V)$ for affine
open subsets $V \subseteq U$,
\begin{center}
{$\xymatrix{ \Ker \alpha_U \ar@{^{(}->}[d] \ar[rr]&& \Ker \alpha_V \ar@{^{(}->}[d]\\
 \mathcal{F}(U) \ar[rr]^-{\res_{UV}}\ar[d]_-{\alpha_U} && \mathcal{F}(V)\ar[d]^-{\alpha_V}\\
 \prod\mathcal{F}_i(U)\ar@{^{(}->}[rr]_{(\res ^i_{UV})_I}&&\prod \mathcal{F}_i(V)
 }$ \quad\quad $\xymatrix{ \os_X(V)\otimes_{\os_X(U)}\Ker \alpha_U \ar@{^{(}->}[d] \ar[rr]^-{f}&& \Ker \alpha_V \ar@{^{(}->}[d]\\
 \os_X(V)\otimes_{\os_X(U)}\mathcal{F}(U) \ar[rr]^-{g}\ar[d]_-{\id \otimes \alpha_U} && \mathcal{F}(V)\ar[d]_-{\alpha_V}\\
 \os_X(V)\otimes_{\os_X(U)}\prod\mathcal{F}_i(U)\ar@{^{(}->}[rr]_-{\id \otimes (\res ^i_{UV})_I}&&\prod \mathcal{F}_i(V) .}$}
 \end{center}
For short, $\iota_1:=\id_{\os_X(V) \otimes_ {\os _X(U)} \iota_{\Ker
\alpha_U}}$ and $\iota_2:=\iota_{\Ker \alpha_V}$ are inclusion
morphisms. Of course, the last morphism of the first diagram, $(\res
^i_{UV})_I: \prod\mathcal{F}_i(U) \rightarrow \prod
\mathcal{F}_i(V)$ is
monic by Lemma \ref{monic} and it is preserved in the second diagram
since $\os_X(V)$ is flat as $\os_X(U)$-module (where $\res^i_{UV}$ is the restriction map of
$\mathcal{F}_i$ between affine open subsets $V \subseteq U$). As a first
observation, it can be easily seen that  $ f:= \id_{\os_X(V)}
\otimes_{\os_X(U)} \res_{UV}\mid_{\Ker \alpha_U} : \os_X(V)
\otimes_{\os_X(U)} \Ker \alpha_U \rightarrow \Ker \alpha_V $, where
$\res_{UV}:\mathcal{F}(U) \rightarrow \mathcal{F}(V)$, is a
monomorphism  by the fact that $\mathcal{F}$ is quasi-coherent and
$\Ker \alpha_U \subseteq \mathcal{F}(U)$. Keeping in mind that the
morphism $g:=\id_{\os_X(V)} \otimes_{\os_X(U)} \res_{UV}$ is an
isomorphism, $\id \otimes \alpha_U \circ g^{-1}\circ \iota_2=0$ and
so by the universal property of the kernel, there is a morphism $f':
\Ker \alpha_V \rightarrow \os_X(V) \otimes_ {\os _X(U)} \Ker
\alpha_U $ such that $\iota_1 \circ f'= g^{-1} \circ \iota_2$. The
commutativity of the second diagram and the last equality of
morphisms helps us to get that $ f\circ f'=\iota_2 \circ f \circ
f'=g \circ \iota_1 \circ f'=g \circ g^{-1} \circ
\iota_2=\iota_2=\id_{\Ker \alpha_V}$ and $f' \circ f=i_1 \circ f'
\circ f =g^{-1} \circ \iota_2 \circ f=g^{-1} \circ g \circ
\iota_1=\iota_1=\id_{\os_X(V) \otimes_ {\os _X(U)} \Ker \alpha_U}$.
This means that $f$ is an isomorphism.

In fact, the morphism $\Ker \alpha \hookrightarrow \mathcal{F}
\rightarrow \mathcal{F}_i$ is the zero morphism for each $i \in I$.
The universality of the direct product $\mathcal{F}$ in $\qco(X)$
implies that $\Ker \alpha=0$.
\end{proof}
\begin{theorem}\label{prodtf}
The direct product $\mathcal{F}$ of a family $\{\mathcal{F}_i\}_{i\in I}$ of torsion-free quasi-coherent sheaves in $\qco(X)$ is the largest quasi-coherent subsheaf of $\prod_{i\in I} \mathcal{F}_i$. More concretely, it is  of the form
$$\mathcal{F}=\sum_{\substack{\mathcal{M}\in \qco(X)\\\mathcal{M} \subseteq \prod_{i\in I} \mathcal{F}_i}} \mathcal{M}.$$

\end{theorem}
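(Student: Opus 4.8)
The plan is to reduce the statement to the single assertion that every quasi--coherent subsheaf of $\mathcal{P}:=\prod_{i\in I}\mathcal{F}_i$ (the product taken in $\mathfrak{Sh}(X)$) is already contained in $\mathcal{F}$. First I would record the output of Proposition \ref{prodsub}: the canonical comparison morphism $\alpha:\mathcal{F}\to\mathcal{P}$ is a monomorphism, so $\mathcal{F}$ may be regarded as a quasi--coherent subsheaf of $\mathcal{P}$. In particular $\mathcal{F}$ is itself one of the subsheaves $\mathcal{M}$ occurring in the displayed sum, whence $\mathcal{F}\subseteq\sum_{\mathcal{M}}\mathcal{M}$, and the whole theorem will follow once the reverse inclusion $\mathcal{M}\subseteq\mathcal{F}$ is established for an arbitrary quasi--coherent subsheaf $\mathcal{M}$ of $\mathcal{P}$; that same inclusion is precisely what the word ``largest'' means.

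The key step is a factorization argument playing off the two universal properties available. Write $p_i:\mathcal{P}\to\mathcal{F}_i$ for the projections of the product in $\mathfrak{Sh}(X)$ and $q_i:\mathcal{F}\to\mathcal{F}_i$ for those of the product in $\qco(X)$, so that $\alpha$ is the unique morphism satisfying $p_i\circ\alpha=q_i$. Given a quasi--coherent subsheaf $\iota:\mathcal{M}\hookrightarrow\mathcal{P}$, the composites $p_i\circ\iota:\mathcal{M}\to\mathcal{F}_i$ are morphisms in $\qco(X)$, so the universal property of $\mathcal{F}=\prod_{i}^{\qco}\mathcal{F}_i$ yields a unique $\psi:\mathcal{M}\to\mathcal{F}$ with $q_i\circ\psi=p_i\circ\iota$ for all $i$. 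Then $p_i\circ(\alpha\circ\psi)=q_i\circ\psi=p_i\circ\iota$ for every $i$, so by the uniqueness clause in the universal property of $\mathcal{P}$ in $\mathfrak{Sh}(X)$ one gets $\alpha\circ\psi=\iota$. Since $\iota$ is a monomorphism, so is $\psi$, and the equality $\alpha\circ\psi=\iota$ exhibits the subobject $\mathcal{M}$ of $\mathcal{P}$ as factoring through the subobject $\mathcal{F}$; that is, $\mathcal{M}\subseteq\mathcal{F}$.

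Combining the two inclusions gives $\mathcal{F}=\sum_{\mathcal{M}}\mathcal{M}$ and identifies $\mathcal{F}$ as the largest quasi--coherent subsheaf of $\mathcal{P}$. I expect the only delicate point to be bookkeeping rather than mathematics: one must keep the two genuinely different product structures apart and verify the compatibility $p_i\circ\alpha=q_i$, which is exactly what forces $\alpha\circ\psi=\iota$ and hence makes the factorization take place \emph{as subobjects} of $\mathcal{P}$ rather than merely up to an abstract morphism. It is worth emphasizing that all of the arithmetic content---integrality of $X$ and torsion--freeness of the $\mathcal{F}_i$---has already been spent in Proposition \ref{prodsub} to guarantee that $\alpha$ is monic; granting that, the present theorem is a purely formal consequence of the interplay between the products in $\qco(X)$ and in $\mathfrak{Sh}(X)$, and needs no further local computation.
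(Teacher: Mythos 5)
Your proposal is correct and follows essentially the same route as the paper's own proof: both deduce from Proposition \ref{prodsub} that $\mathcal{F}$ embeds in $\prod_{i\in I}\mathcal{F}_i$, and both show that an arbitrary quasi--coherent subsheaf factors through $\mathcal{F}$ by applying the universal property of the product in $\qco(X)$ to the composites with the projections. Your version is in fact slightly more careful than the paper's, which asserts that the induced map ``is an inclusion'' by inspecting the projections on sections, whereas you justify $\alpha\circ\psi=\iota$ cleanly via the uniqueness clause of the product in $\mathfrak{Sh}(X)$.
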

\begin{proof}

By the Proposition \ref{prodsub} , we know that $\mathcal{F}$ is a quasi-coherent subsheaf of $\prod_{i\in I} \mathcal{F}_i$. Now, let  $\mathcal{F'}$ be a quasi-coherent subsheaf in $\prod_{i\in I} \mathcal{F}_i$. Consider the morphism
$$\xymatrix{\mathcal{F'} \ar@{^{(}->}[r] & \prod_{i\in I} \mathcal{F}_i \ar[r]^{\pi _i}&\mathcal{F}_i} $$
for each $i \in I$. By the universal property of the direct product, there is a unique morphism $f:\mathcal{F'} \rightarrow \mathcal{F}$ such that $(\pi_i \mid_{\mathcal{F'}})=(\pi_i \mid_{\mathcal{F}}) \circ f$. But for an open subset $U$ of $X$, the projection map $\pi_i (U) :\prod_{i\in I} \mathcal{F}_i(U) \rightarrow \mathcal{F}_i(U)$ is the canonical one. So we can deduce that the morphism $f$ that we have obtained is an inclusion. This proves that $\mathcal{F}$ is the largest quasi-coherent subsheaf of $\prod_{i\in I} \mathcal{F}_i$. It implies that
$$\mathcal{F}=\sum_{\substack{\mathcal{M}\in \qco(X)\\\mathcal{M} \subseteq \prod_{i\in I} \mathcal{F}_i}} \mathcal{M}.$$

\end{proof}

\begin{corollary}\label{tors}
The class $\mathscr{F}$ in $\qco(X)$ is closed under arbitrary direct products. In particular it induces a torsion theory of finite type in $\qco(X)$.
\end{corollary}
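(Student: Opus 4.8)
The plan is to read off all three assertions from Theorem~\ref{prodtf} together with a couple of elementary facts about modules over a domain, reducing each claim to the level of sections over affine open sets.

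First I would prove closure under products. Given a family $\{\mathcal F_i\}_{i\in I}$ in $\mathscr F$ with product $\mathcal F$ in $\qco(X)$, Theorem~\ref{prodtf} realises $\mathcal F$ as a quasi-coherent subsheaf of the product $\prod_{i\in I}\mathcal F_i$ taken in $\mathfrak{Sh}(X)$. Because products in $\mathfrak{Sh}(X)$ are computed sectionwise, for each affine open $U$ this gives an inclusion of $\os_X(U)$-modules $\mathcal F(U)\hookrightarrow\prod_{i\in I}\mathcal F_i(U)$. I would then invoke the two facts that a product of torsion-free modules over the domain $\os_X(U)$ is torsion-free and that submodules of torsion-free modules are torsion-free to conclude that $\mathcal F(U)$ is torsion-free; since $U$ was an arbitrary affine open, $\mathcal F\in\mathscr F$.

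Next I would assemble the torsion theory. The text has already recorded that $\mathscr F$ is closed under subobjects and under extensions; combining this with the closure under products just established, the dual of the characterization of torsion classes quoted after the definition of torsion theory shows that $\mathscr F$ is the torsion-free class of a torsion theory $(\mathscr T,\mathscr F)$, where $\mathscr T=\{\mathcal G\mid \Hom(\mathcal G,\mathcal F)=0\ \text{for all}\ \mathcal F\in\mathscr F\}$. To upgrade this to a torsion theory of finite type I must verify that $\mathscr F$ is closed under direct limits. Here I would use that direct limits in $\qco(X)$ agree with those in $\os_X$-modules and that restriction to an affine open $U$ composed with the equivalence $\qco(U)\simeq\Mod(\os_X(U))$ preserves colimits, so that $(\varinjlim_\alpha\mathcal F_\alpha)(U)=\varinjlim_\alpha\mathcal F_\alpha(U)$ for every affine open $U$; closure then follows from the fact that a filtered limit of torsion-free $\os_X(U)$-modules is torsion-free.

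The genuinely substantial input is Theorem~\ref{prodtf}, so no real obstacle remains for the product statement. The one point that requires care is the finite-type claim: I need to be sure that direct limits in $\qco(X)$ really are computed sectionwise on affine opens (equivalently, that $\qco(X)$ is closed under filtered colimits inside $\os_X$-modules), since without this reduction to the module level the torsion-freeness of the limit is not immediate.
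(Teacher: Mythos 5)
Your argument is correct and follows essentially the same route as the paper: the paper's proof also deduces closure under products by viewing the categorical product as a quasi-coherent subsheaf of the sectionwise product $\prod_{i\in I}\mathcal F_i$ (via Proposition~\ref{prodsub}) and then combines this with closure under subobjects, extensions and direct limits to obtain a torsion theory of finite type. You merely make explicit the module-level facts (products and submodules of torsion-free modules over a domain are torsion-free; filtered colimits in $\qco(X)$ are computed sectionwise on affine opens) that the paper leaves implicit.
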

\begin{proof}

$\mathscr{F}$ is closed under direct limits and under arbitrary products in view of Proposition \ref{prodsub}. Since it is also closed under subobjects and extensions it is the right part of a torsion theory of finite type $(\mathscr{F'},\mathscr{F})$ in $\qco(X)$.
\end{proof}

On an integral scheme every flat quasi--coherent sheaf is locally torsion-free. Thus we immediately follow:
\begin{corollary}
The direct product $\mathcal{F}$ of a family $\{\mathcal{F}_i\}_{i\in I}$ of flat quasi-coherent sheaves in $\qco(X)$ is the largest quasi-coherent subsheaf of $\prod_{i\in I} \mathcal{F}_i$. More concretely, it is  of the form
$$\mathcal{F}=\sum_{\substack{\mathcal{M}\in \qco(X)\\\mathcal{M} \subseteq \prod_{i\in I} \mathcal{F}_i}} \mathcal{M}.$$
\end{corollary}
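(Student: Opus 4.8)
The plan is to reduce the statement directly to Theorem \ref{prodtf}. The only point that needs checking is that, on an integral scheme, every flat quasi-coherent sheaf already lies in the class $\mathscr{F}$ of locally torsion-free sheaves; once this is established the conclusion is immediate, because the categorical direct product in $\qco(X)$ does not depend on how we choose to describe its factors, so the product of the $\mathcal{F}_i$ viewed as flat sheaves coincides with their product viewed as torsion-free sheaves.

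First I would establish the local statement. Fix an affine open subset $U\subseteq X$. Since $X$ is integral, $\os_X(U)$ is an integral domain, and since $\mathcal{F}_i$ is a flat quasi-coherent sheaf, $\mathcal{F}_i(U)$ is a flat $\os_X(U)$-module. Now for any flat module $M$ over a domain $R$ and any nonzero $r\in R$, multiplication by $r$ is injective on $R$ (as $R$ has no zero divisors), and tensoring this injection with the flat module $M$ keeps it injective on $M$; hence $M$ has no nonzero torsion. Applying this to $M=\mathcal{F}_i(U)$ and $R=\os_X(U)$ shows that $\mathcal{F}_i(U)$ is torsion-free, and since $U$ was an arbitrary affine open, each $\mathcal{F}_i$ belongs to $\mathscr{F}$.

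With this in hand, $\{\mathcal{F}_i\}_{i\in I}$ is a family of locally torsion-free quasi-coherent sheaves, so Theorem \ref{prodtf} applies without change: the direct product $\mathcal{F}$ in $\qco(X)$ is the largest quasi-coherent subsheaf of $\prod_{i\in I}\mathcal{F}_i$, and is therefore equal to the sum of all quasi-coherent subsheaves $\mathcal{M}\subseteq \prod_{i\in I}\mathcal{F}_i$. I do not expect any genuine obstacle here; all of the substance is already contained in Theorem \ref{prodtf}, and the corollary amounts to the elementary observation that over an integral scheme flatness is strictly stronger than local torsion-freeness, so that the description of the product object carries over verbatim to the flat case.
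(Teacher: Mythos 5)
Your proposal is correct and follows the paper's own route exactly: the paper likewise reduces the corollary to Theorem \ref{prodtf} by observing that on an integral scheme every flat quasi-coherent sheaf is locally torsion-free. You merely spell out the standard verification (tensoring the injection $r\colon R\to R$ with a flat module preserves injectivity) that the paper leaves implicit.
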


\begin{proof}
This follows by noticing that every flat quasi--coherent sheaf is in fact locally torsion-free.
\end{proof}

Now we get another proof of \cite[Proposition 4.16]{Saorin}.
\begin{corollary}
Let $X$ be a Dedekind scheme. The class ${\rm Flat}(X)$ of flat quasi--coherent sheaves is closed under taking products in $\qco(X)$.
\end{corollary}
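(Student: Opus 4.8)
The plan is to deduce this corollary as an immediate specialization of the preceding results, using the characterization of flat quasi-coherent sheaves on a Dedekind scheme. The essential input is the elementary fact from commutative algebra that over a Dedekind domain, a module is flat if and only if it is torsion-free. Since a Dedekind scheme is in particular integral, all the machinery of this section applies, and the key is to translate the Dedekind condition into the locally torsion-free condition that Corollary \ref{tors} already handles.

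First I would recall that a Dedekind scheme is integral, so the standing assumption of the section holds and $\qco(X)$ has the structure analyzed above. Next I would establish the identification $\mathrm{Flat}(X) = \mathscr{F}$ on a Dedekind scheme: a quasi-coherent sheaf $\mathcal{F}$ is flat precisely when $\mathcal{F}(U)$ is a flat $\os_X(U)$-module for each affine open $U$, and since each $\os_X(U)$ is a Dedekind domain (the local rings being discrete valuation rings), flatness of $\mathcal{F}(U)$ is equivalent to torsion-freeness of $\mathcal{F}(U)$. Thus $\mathcal{F}$ is flat if and only if $\mathcal{F} \in \mathscr{F}$, giving the equality of classes $\mathrm{Flat}(X) = \mathscr{F}$.

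With this identification in hand, the result follows directly: by Corollary \ref{tors}, the class $\mathscr{F}$ is closed under arbitrary direct products in $\qco(X)$. Since $\mathrm{Flat}(X) = \mathscr{F}$ for a Dedekind scheme, the class $\mathrm{Flat}(X)$ inherits this closure property. Concretely, given a family $\{\mathcal{F}_i\}_{i\in I}$ of flat quasi-coherent sheaves, each is locally torsion-free, so by Proposition \ref{prodsub} and Theorem \ref{prodtf} their product $\mathcal{F}$ in $\qco(X)$ is the largest quasi-coherent subsheaf of $\prod_{i\in I}\mathcal{F}_i$ and lies in $\mathscr{F} = \mathrm{Flat}(X)$.

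There is essentially no obstacle here, since all the analytic work was absorbed into Theorem \ref{prodtf} and Corollary \ref{tors}; the only content to verify is the purely local equivalence between flatness and torsion-freeness over a Dedekind domain, which is standard. The one point deserving a line of care is confirming that flatness of a quasi-coherent sheaf on a scheme is indeed a local affine-sections condition and that the local rings of a Dedekind scheme are discrete valuation rings, so that the module-level equivalence genuinely glues to the sheaf-level statement $\mathrm{Flat}(X)=\mathscr{F}$.
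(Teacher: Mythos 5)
Your proposal is correct and follows essentially the same route as the paper: both reduce the statement to the closure of $\mathscr{F}$ under products (which rests on Proposition \ref{prodsub}) together with the identification ${\rm Flat}(X)=\mathscr{F}$ for Dedekind schemes via the standard fact that flat equals torsion-free over a Dedekind domain. The only cosmetic difference is that the paper re-derives the membership of the product in $\mathscr{F}$ directly from Proposition \ref{prodsub} and the sheaf-level torsion-free class $\overline{\mathscr{F}}$, while you cite Corollary \ref{tors}; these are the same argument.
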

\begin{proof}
Let $\{\mathcal{F}_i\}_{i\in I}$ be a family of flat quasi--coherent sheaves, hence a family of ${\mathcal O}_X$-modules in $\overline{\mathscr{F}}$. By Proposition \ref{prodsub}, the product object $\mathcal{F}$ in $\qco(X)$ is a subsheaf of $\prod_I \mathcal{F}_i$ which is locally torsion-free, because $\overline{\mathscr{F}}$ is a torsion-free class (so, in particular, closed under products). Hence $\mathcal{F}\in \mathscr{F}$. But for a Dedekind scheme the classes $\mathscr{F}$ and ${\rm Flat}(X)$ coincide, so we are done.
\end{proof}

%We finish this section by showing that
\section{Torsion-free covers in $\qco(X)$}\label{tfcover}

In the previous Corollary \ref{tors} we showed that the class $\mathscr{F}$ of locally torsion-free quasi--coherent sheaves is the right part of a torsion theory in $\qco(X)$. One immediate consecuence of this is that each $\mathcal{M}\in \qco(X)$ admits an $\mathscr{F}$-\emph{reflection} and thus $\mathscr{F}$ is a \emph{reflective} class in $\qco(X)$ (see \cite{MacL} for notation and terminology and \cite{SRV} for a nice treatment and characterization of reflective subcategories). So, in particular, we deduce that $\mathscr{F}$ is enveloping.

This section is devoted to prove that the class $\mathscr{F}$ is also covering, that is, that each $\mathcal{M}\in \qco(X)$ admits an $\mathscr{F}$-cover.

Recall that a quasi-coherent sheaf $\mathcal{F}$ is said to be of \emph{type} $\kappa$, for $\kappa$ an infinite cardinal, if each $\mathcal{F}(U)$ is an $\os_X(U)$-module at most $\kappa$-generated for each affine open subset $U \subseteq X$. Let $\kappa$ be an infinite regular cardinal such that $\kappa > \mid \os_X (U)
\mid$ for each affine open subsets $U \subseteq X$ and  $\kappa > \mid
H\mid$, where $H:=\{\res_{UV} \mid \textrm{  for affine subsets }
V\subseteq U \subseteq X\}$.

On the other hand, given $\mathcal{F}\in \qco(X)$, we define the \emph{cardinality of }$\mathcal F$, $\mid \mathcal{F} \mid$, as $$\mid \mathcal{F} \mid={\rm sup}\{\mid \mathcal{F}(U)\mid:\ U\in \mathcal{U}\},$$ (here $\mathcal{U}$ stands for the set of all affine open subsets of $X$). Note that if $\kappa$ is as before, then $\mid \mathcal{F}\mid<\kappa$ if and only if $\mathcal{F}$ is of type $\kappa$.
\begin{lemma}\label{filt1}
Let $S$ be the set of isomorphism classes of quasi-coherent sheaves in $\mathscr{F}$ of type $\kappa$. Then $\mathscr{F}= \Filt(S)$.
\end{lemma}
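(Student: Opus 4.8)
The plan is to prove the two inclusions $\Filt(S)\subseteq\mathscr{F}$ and $\mathscr{F}\subseteq\Filt(S)$ separately, the first being routine and the second carrying essentially all the weight. For the inclusion $\Filt(S)\subseteq\mathscr{F}$, note that $S\subseteq\mathscr{F}$ by the very definition of $S$. By Corollary \ref{tors} the class $\mathscr{F}$ is the torsion-free part of a torsion theory of finite type, so it is closed under extensions and under direct limits. A transfinite induction along any continuous filtration $(M_\alpha\mid\alpha\leq\lambda)$ of an object $M\in\Filt(S)$ then gives $M\in\mathscr{F}$: at successor stages one uses the short exact sequence $0\to M_\alpha\to M_{\alpha+1}\to M_{\alpha+1}/M_\alpha\to 0$ together with $M_\alpha\in\mathscr{F}$ (induction hypothesis) and $M_{\alpha+1}/M_\alpha\in S\subseteq\mathscr{F}$ and closure under extensions; at limit stages one uses $M_\beta=\varinjlim_{\alpha<\beta}M_\alpha$ and closure under direct limits.

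The hard inclusion $\mathscr{F}\subseteq\Filt(S)$ rests on the following enlargement lemma, which I would establish first: for every quasi-coherent subsheaf $\mathcal{G}\subseteq\mathcal{F}$ with $\mid\mathcal{G}\mid<\kappa$ there is a quasi-coherent subsheaf $\mathcal{H}$ with $\mathcal{G}\subseteq\mathcal{H}\subseteq\mathcal{F}$, $\mid\mathcal{H}\mid<\kappa$, and $\mathcal{F}/\mathcal{H}\in\mathscr{F}$. I would build $\mathcal{H}$ by alternating two operations. The first is \emph{saturation}: on each affine $U=\Spec R$ with $K$ the field of fractions of $R$, replace a submodule $N\subseteq M=\mathcal{F}(U)$ by $\{x\in M\mid rx\in N\text{ for some }0\neq r\in R\}=M\cap(N\otimes_R K)$, whose quotient into $M$ is torsion-free; since $\dim_K(N\otimes_R K)\leq\mid N\mid<\kappa$ and $\mid K\mid=\mid R\mid<\kappa$, this submodule still has cardinality $<\kappa$. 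The second is \emph{quasi-coherentization}: pass to the smallest quasi-coherent subsheaf of $\mathcal{F}$ containing the current data, whose size is controlled by propagation along restriction maps and is kept $<\kappa$ using $\kappa>\mid\os_X(U)\mid$ and $\kappa>\mid H\mid$. Neither operation alone produces a subsheaf that is simultaneously quasi-coherent and saturated, so I would iterate them alternately $\omega$ times and set $\mathcal{H}=\bigcup_{n<\omega}$; regularity of $\kappa$ keeps $\mid\mathcal{H}\mid<\kappa$, and a directed union of quasi-coherent subsheaves is quasi-coherent while a directed union of saturated submodules is saturated, so the limit object has both required properties and $\mathcal{F}/\mathcal{H}\in\mathscr{F}$.

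With the lemma in hand, the filtration is produced by transfinite recursion. Given $\mathcal{F}\in\mathscr{F}$, well-order a generating family of its sections and build a continuous chain $(\mathcal{F}_\alpha\mid\alpha\leq\lambda)$ with $\mathcal{F}_0=0$: at a successor $\alpha+1$ apply the enlargement lemma to the quasi-coherent subsheaf generated by $\mathcal{F}_\alpha$ together with the next generator, obtaining a saturated quasi-coherent $\mathcal{F}_{\alpha+1}$ with $\mid\mathcal{F}_{\alpha+1}\mid<\kappa$; at limit ordinals set $\mathcal{F}_\beta=\bigcup_{\alpha<\beta}\mathcal{F}_\alpha$, which remains quasi-coherent and saturated. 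Since each $\mathcal{F}_\alpha$ is saturated we have $\mathcal{F}/\mathcal{F}_\alpha\in\mathscr{F}$, so the subquotient $\mathcal{F}_{\alpha+1}/\mathcal{F}_\alpha$ embeds as a quasi-coherent subsheaf of $\mathcal{F}/\mathcal{F}_\alpha$ and is therefore locally torsion-free by closure of $\mathscr{F}$ under subobjects; moreover $\mid\mathcal{F}_{\alpha+1}\mid<\kappa$ forces $\mathcal{F}_{\alpha+1}/\mathcal{F}_\alpha$ to be of type $\kappa$, whence $\mathcal{F}_{\alpha+1}/\mathcal{F}_\alpha\in S$. As the chain exhausts $\mathcal{F}$, this exhibits $\mathcal{F}\in\Filt(S)$.

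The main obstacle is the enlargement lemma, that is, reconciling saturation, which is an affine-local operation needed to force torsion-freeness of the quotient, with quasi-coherence, which is a global compatibility condition governed by the restriction maps in $H$; each operation can destroy the output of the other, and the resolution is the controlled $\omega$-iteration whose convergence and cardinality bounds depend crucially on $\kappa$ being regular with $\kappa>\mid\os_X(U)\mid$ and $\kappa>\mid H\mid$. A secondary point to verify carefully is that saturation and quasi-coherence both pass to directed unions, which is what makes the limit stages of both the iteration and the filtration legitimate.
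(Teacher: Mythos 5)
Your reduction of everything to the enlargement lemma is a legitimate alternative route: the paper instead quotes the Enochs--Estrada result that every section of $\mathcal{F}$ lies in a \emph{pure} quasi-coherent subsheaf of type $\kappa$, and observes that purity of $\mathcal{G}(U)$ in the torsion-free module $\mathcal{F}(U)$ (namely $r\mathcal{G}(U)=r\mathcal{F}(U)\cap\mathcal{G}(U)$) already forces $\mathcal{F}(U)/\mathcal{G}(U)$ to be torsion-free. Your saturation-plus-quasi-coherentization iteration achieves the same end without invoking purity, and the cardinality bookkeeping for a single application of the lemma is sound: the saturation of $N$ in $M=\mathcal{F}(U)$ is $M\cap NK$, of cardinality at most $\max(|N|,|K|,\aleph_0)<\kappa$, and regularity of $\kappa$ controls the $\omega$-iteration. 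The easy inclusion $\Filt(S)\subseteq\mathscr{F}$ is handled exactly as in the paper.

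The gap is in the transfinite recursion. Your enlargement lemma takes as input a subsheaf $\mathcal{G}$ with $|\mathcal{G}|<\kappa$ and returns $\mathcal{H}$ with $|\mathcal{H}|<\kappa$, and at stage $\alpha+1$ you feed it the subsheaf generated by $\mathcal{F}_\alpha$ and the next generator. As soon as $|\mathcal{F}_\alpha|\geq\kappa$ --- which happens no later than the limit stage $\kappa$ whenever $\mathcal{F}$ is not itself of type $\kappa$, i.e.\ in exactly the cases where the lemma has content --- the hypothesis of your enlargement lemma fails, and the asserted conclusion ``$|\mathcal{F}_{\alpha+1}|<\kappa$ for all $\alpha$'' is impossible, since the chain must exhaust $\mathcal{F}$. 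Because you derive $\mathcal{F}_{\alpha+1}/\mathcal{F}_\alpha\in S$ precisely from $|\mathcal{F}_{\alpha+1}|<\kappa$, the successor step collapses. The repair is standard but has to be said: since $\mathcal{F}_\alpha$ is saturated, $\mathcal{F}/\mathcal{F}_\alpha$ is again a locally torsion-free quasi-coherent sheaf, so apply the absolute enlargement lemma to the image of the next generator inside $\mathcal{F}/\mathcal{F}_\alpha$ and pull back; equivalently, prove a relative version, noting that saturating $\mathcal{F}_\alpha+\mathcal{N}$ modulo the already saturated $\mathcal{F}_\alpha$ adds at most $\max(|\mathcal{N}|,|K|)<\kappa$ cosets on each affine, so that $|\mathcal{F}_{\alpha+1}/\mathcal{F}_\alpha|<\kappa$ even though $|\mathcal{F}_{\alpha+1}|$ is large. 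This is exactly how the paper arranges its induction: it always extracts the (pure) small subsheaf from the quotient $\mathcal{F}/\mathcal{F}_\alpha$ rather than from $\mathcal{F}$ itself.
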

\begin{proof}
Let $\mathcal{F} \in \mathscr{F}$ and  $x \in \mathcal{F}(U)$  for some affine open subset $U$. By \citet[Proposition 3.3]{relativehom}, there is a quasi-coherent pure subsheaf $\mathcal{G} \subseteq \mathcal{F}$ of $\kappa$ type  containing $x$. Here, the purity  is considered in the sense of tensor product. As known, it has the global and local properties on affine open subsets, i.e., $\mathcal{G}(U) $ is a pure submodule of $\mathcal{F}(U)$ for each affine open subset $U$. This implies that $r \mathcal{G}(U)= r\mathcal{F}(U)\cap \mathcal{G}(U)$ for all $r \in \os(U)$, and so $\mathcal{F}(U)/\mathcal{G}(U)$ is torsion-free for all affine open subset $U$, $\mathcal{F}/\mathcal{G} \in \mathscr{F}$.

By transfinite induction, we will construct an $S$-filtration for each object in $\mathscr{F}$. For $\mathcal{F} \in \mathscr{F}$, consider $\mu = \mid \mathcal{F} \mid$ and  $\mathcal{F}_0=0$, $\mathcal{F}_1:= \mathcal{G}$ obtained as above. For $\alpha < \mu$, if $x + \mathcal{G}_\alpha(U) \in (\mathcal{F}/ \mathcal{F}_\alpha)(U)=\mathcal{F}(U)/ \mathcal{F}_\alpha(U)$, there is a pure quasi-coherent  subsheaf $\mathcal{F}_{\alpha+1}/ \mathcal{F}_\alpha $ of type $\kappa$ containing $x + \mathcal{F}_\alpha$. For a limit ordinal $\beta \leq \mu $, $\mathcal{F}_\beta := \varinjlim _{\alpha < \beta} \mathcal{F}_\alpha$. Then, $(\mathcal{F}_\alpha \mid \textrm{ } \alpha \leq \mu)$ is an $S$-filtration for $\mathcal{F}$.

From that construction, we get that $\mathscr{F} \subseteq \Filt (S)$. Actually, $\Filt(S)= \mathscr{F}$. Indeed, if $(\mathcal{M}_\alpha \mid \textrm{ } \alpha \leq \lambda)$ is an $S$-filtration of a quasi-coherent sheaf $\mathcal{M}$, we have that $\mathcal{M}_1=\mathcal{M}_1/\mathcal{M}_0=\mathcal{M}_1/0$ is in $\mathscr{F}$. And now if we suppose that $\mathcal{M}_\alpha\in \mathscr{F}$, for $\alpha < \lambda$, we have a short exact sequence
$$0 \longrightarrow \mathcal{M}_\alpha \longrightarrow \mathcal{M}_{\alpha+1} \longrightarrow \mathcal{M}_{\alpha+1}/ \mathcal{M}_\alpha\longrightarrow 0,$$
where $\mathcal{M}_\alpha,\mathcal{M}_{\alpha+1}/ \mathcal{M}_\alpha $ are in $\mathscr{F}$. Therefore, $\mathcal{M}_{\alpha+1}$ is also in $\mathscr{F}$. Since $\mathscr{F}$ is closed under direct limits, $\mathcal{M}_\alpha$ is locally torsion-free whenever $\alpha$ is a limit ordinal. This implies that $\mathcal{M}_\lambda=\mathcal{M}$ is locally torsion-free.
\end{proof}

We will adapt the arguments of \cite{E} to the category $\qco(X)$ to infer in Theorem \ref{extfc} that $\mathscr{F}$ is covering. Since the set of affine open subsets of $X$ is a base of the scheme and uniquely determines quasi--coherent sheaves over it, we will often use the images of a quasi--coherent sheaf on affine  open subsets.
\begin{lemma}\label{l1}
Assume that $(\mathcal{F}_{\alpha})_{\alpha \leq \kappa}$ is a
filtration of $\mathcal{F}$ in $\qco(X)$. If $\mid \mathcal{F}' / \mathcal{ F} \mid <
\kappa$ where $\mathcal{F} \subseteq \mathcal {F} ' \in \qco(X)$,
then there is a filtration $(\mathcal{F}' _\alpha)_{\alpha \leq
\kappa}$ which is compatible with the one of $\mathcal {F}$ and
except for possibly one $\beta < \kappa$, $\mathcal{F}_{\alpha +1}/
\mathcal{F}_{\alpha}$ is isomorphic to $\mathcal{F}' _{\alpha +1}/
\mathcal{F}'_{\alpha}$ and $\mathcal{F}_{\beta +1}/
\mathcal{F}_{\beta}$ is a direct summand of $\mathcal{F}_{\beta +1}/
\mathcal{F}_{\beta}$ with the complement $\mathcal{F}' / \mathcal{F}$.
\end{lemma}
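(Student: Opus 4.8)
The plan is to absorb the small quotient $N:=\mathcal{F}'/\mathcal{F}$ into a single step of the given chain, leaving every other factor untouched. First I would produce a ``small'' witness for $N$ inside $\mathcal{F}'$. Since $\mid N\mid=\mid\mathcal{F}'/\mathcal{F}\mid<\kappa$, the same generation device already exploited in Lemma \ref{filt1} (via \cite[Proposition 3.3]{relativehom}) yields a quasi-coherent subsheaf $\mathcal{W}\subseteq\mathcal{F}'$ of type $\kappa$ with $\mathcal{W}+\mathcal{F}=\mathcal{F}'$; equivalently, the restriction to $\mathcal{W}$ of the projection $q\colon\mathcal{F}'\to N$ is onto. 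Because $\mathcal{W}$ is of type $\kappa$ and $\kappa>\mid\os_X(U)\mid$ for every affine open $U$, the quasi-coherent subsheaf $\mathcal{W}\cap\mathcal{F}$ is again of type $\kappa$.

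Next I would locate the exceptional index $\beta$. Since $\mathcal{F}=\mathcal{F}_\kappa=\varinjlim_{\alpha<\kappa}\mathcal{F}_\alpha$ is the continuous union of the given chain, $\kappa$ is regular, and $\mathcal{W}\cap\mathcal{F}$ is of type $\kappa$, there must be a single ordinal $\beta<\kappa$ with $\mathcal{W}\cap\mathcal{F}\subseteq\mathcal{F}_\beta$: each section of $\mathcal{W}\cap\mathcal{F}$ over an affine open lies in some $\mathcal{F}_\alpha$ by continuity, and regularity of $\kappa$ (together with the choice $\kappa>\mid H\mid$ controlling the restriction data of $X$) lets me take the supremum of the fewer-than-$\kappa$ indices so obtained. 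I expect this cardinality accounting --- carried out affine-open by affine-open, and combined with checking that the sums $\mathcal{F}_\alpha+\mathcal{W}$ and the intersection $\mathcal{W}\cap\mathcal{F}$ really are quasi-coherent subsheaves (which I would verify on affine opens using that the restriction maps are monic, Lemma \ref{monic}) --- to be the main technical obstacle; everything after it is formal.

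With $\beta$ fixed I would set
$$\mathcal{F}'_\alpha=\mathcal{F}_\alpha \text{ for } \alpha\le\beta,\qquad \mathcal{F}'_\alpha=\mathcal{F}_\alpha+\mathcal{W} \text{ for } \alpha>\beta,$$
and check that $(\mathcal{F}'_\alpha)_{\alpha\le\kappa}$ is a continuous chain of monomorphisms with $\mathcal{F}'_0=0$ and $\mathcal{F}'_\kappa=\mathcal{F}+\mathcal{W}=\mathcal{F}'$, which contains $(\mathcal{F}_\alpha)_{\alpha\le\kappa}$ step by step and is thus compatible with it; continuity at limit ordinals above $\beta$ is immediate because cofinally many earlier stages already contain $\mathcal{W}$. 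For $\alpha\neq\beta$ the factor is unchanged: this is clear when $\alpha<\beta$, and when $\alpha>\beta$ the choice of $\beta$ gives $\mathcal{F}_{\alpha+1}\cap(\mathcal{F}_\alpha+\mathcal{W})=\mathcal{F}_\alpha$ (an element of the left-hand side differs from one of $\mathcal{F}_\alpha$ by an element of $\mathcal{W}\cap\mathcal{F}\subseteq\mathcal{F}_\beta\subseteq\mathcal{F}_\alpha$), whence $\mathcal{F}'_{\alpha+1}/\mathcal{F}'_\alpha\cong\mathcal{F}_{\alpha+1}/\mathcal{F}_\alpha$ by the second isomorphism theorem.

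Finally, at the exceptional step I would exhibit the splitting directly, so no appeal to a splitting hypothesis on $0\to\mathcal{F}\to\mathcal{F}'\to N\to0$ is needed. Write $\bar q$ for the map $\mathcal{F}'_{\beta+1}/\mathcal{F}'_\beta=(\mathcal{F}_{\beta+1}+\mathcal{W})/\mathcal{F}_\beta\to N$ induced by $q$; the same intersection computation gives $(\mathcal{F}_{\beta+1}+\mathcal{W})\cap\mathcal{F}=\mathcal{F}_{\beta+1}$, so $\Ker\bar q=\mathcal{F}_{\beta+1}/\mathcal{F}_\beta$. On the other hand the subsheaf $\mathcal{C}:=(\mathcal{F}_\beta+\mathcal{W})/\mathcal{F}_\beta\cong\mathcal{W}/(\mathcal{W}\cap\mathcal{F}_\beta)=\mathcal{W}/(\mathcal{W}\cap\mathcal{F})$ is carried by $\bar q$ isomorphically onto $N=\mathcal{F}'/\mathcal{F}$. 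Since $\mathcal{C}$ and $\Ker\bar q$ together generate $(\mathcal{F}_{\beta+1}+\mathcal{W})/\mathcal{F}_\beta$ and meet in $(\mathcal{F}_{\beta+1}\cap(\mathcal{F}_\beta+\mathcal{W}))/\mathcal{F}_\beta=0$, I obtain $\mathcal{F}'_{\beta+1}/\mathcal{F}'_\beta\cong\mathcal{F}_{\beta+1}/\mathcal{F}_\beta\oplus N$, which exhibits $\mathcal{F}_{\beta+1}/\mathcal{F}_\beta$ as a direct summand with complement $\mathcal{F}'/\mathcal{F}$, as required.
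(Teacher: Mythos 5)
Your proposal is correct and follows essentially the same route as the paper: build a quasi-coherent subsheaf $\mathcal{W}$ (the paper's $\mathcal{S}$) of cardinality $<\kappa$ surjecting onto $\mathcal{F}'/\mathcal{F}$, use regularity of $\kappa$ (and the bound on the number of affine opens) to find a single $\beta<\kappa$ with $\mathcal{W}\cap\mathcal{F}\subseteq\mathcal{F}_\beta$, and then set $\mathcal{F}'_\alpha=\mathcal{F}_\alpha$ for $\alpha\le\beta$ and $\mathcal{F}'_\alpha=\mathcal{F}_\alpha+\mathcal{W}$ afterwards. Your explicit modular-law verification of the factors at $\alpha\ne\beta$ and of the splitting at $\beta$ fills in details the paper leaves implicit; the only (harmless) quibble is that Lemma \ref{monic} is not needed here, since sums and intersections of quasi-coherent subsheaves are quasi-coherent for arbitrary $\mathcal{F},\mathcal{F}'\in\qco(X)$.
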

\begin{proof}
For each affine open $U \subseteq X$, $S_U$ denotes the $\os_X(U)$ submodule of
$\mathcal {F}'(U)$ which is generated by representatives of $\mathcal{F}'(U)/ \mathcal
{F}(U)$. By assumption, $\mid S_U \mid < \kappa $. We can complete
these subsets to a quasi-coherent subsheaf of $\mathcal{F}'$, say
$\mathcal{S}$, containing these submodules $S_U \subseteq \mathcal{S} (U)$ and with the
cardinality $< \kappa$.

We know that there exists $\beta_U < \kappa$ for each affine $U
\subseteq X$ such that $\mathcal {S} (U) \cap \mathcal
{F}_\beta=\mathcal {S} (U) \cap \mathcal {F}$ since $\kappa$ is a regular cardinal and is the length of the filtration. Consider  $\beta:=
\cup_U \beta_U$. Now define the new filtration as
$\mathcal{F}'_{\alpha}= \mathcal{F}_\alpha$ for $\alpha \leq \beta$,
and $\mathcal{F}'_{\alpha}= \mathcal{F}_\alpha + \mathcal{S}$ for
$\alpha > \beta$. Since these are quasi-coherent, $(\mathcal{F}'_{\alpha+1} /
\mathcal{F}'_{\alpha})(U)=\mathcal{F}'_{\alpha+1}(U) /
\mathcal{F}'_{\alpha}(U)$ and $(\mathcal{F}_\alpha +
\mathcal{S})(U)=\mathcal{F}(U) + \mathcal{S}(U)$ for affine open
subsets and by using the fact $\mathcal {S} (U) \cap \mathcal
{F}_\alpha(U) =\mathcal {S} (U) \cap \mathcal {F}(U)$ for each $\alpha
\geq \beta$, the  claims mentioned in the lemma follow.
\end{proof}
The next corollary says that in $\qco(X)$ it is possible  to convert a filtration of any length and whose quotient between consecutive factors is bounded by $\kappa$ into a filtration with $\kappa$-length. Recall that for a given class $\mathcal{C}$, $\Sum(\mathcal{C})$ is the class of direct sums of objects which are isomorphic to some in $\mathcal{C}$.
\begin{corollary}\label{filt2}
Let $\mathcal{C}$ be a class of quasi-coherent sheaves with
cardinality $< \kappa$. If a quasi-coherent sheaf $\mathcal {F}$ has
a $\mathcal{C}$-filtration, then it has a $\Sum
(\mathcal{C})$-filtration of length $\kappa$.
\end{corollary}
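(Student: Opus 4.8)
The plan is to run a transfinite recursion along the given filtration, feeding it one factor at a time into Lemma \ref{l1}, and to let that lemma manufacture the direct summands that are needed to land in $\Sum(\mathcal{C})$. Write the hypothesised $\mathcal{C}$-filtration of $\mathcal{F}$ as $(\mathcal{F}_\gamma \mid \gamma \le \lambda)$ and set $C_\gamma := \mathcal{F}_{\gamma+1}/\mathcal{F}_\gamma$, so that each $C_\gamma$ is isomorphic to an object of $\mathcal{C}$ and hence $\mid C_\gamma \mid < \kappa$. The goal is to construct, by recursion on $\gamma \le \lambda$, a length-$\kappa$ filtration $(\mathcal{G}^\gamma_\alpha \mid \alpha \le \kappa)$ of $\mathcal{F}_\gamma$ whose successive factors lie in $\Sum(\mathcal{C})$, in such a way that these filtrations are compatible (for $\gamma' < \gamma$ one has $\mathcal{G}^{\gamma'}_\alpha \subseteq \mathcal{G}^\gamma_\alpha$ for every $\alpha$). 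The filtration $(\mathcal{G}_\alpha)_{\alpha\le\kappa}$ obtained at the terminal stage $\gamma = \lambda$ is then the desired $\Sum(\mathcal{C})$-filtration of $\mathcal{F}$ of length $\kappa$.

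For the recursion itself I proceed as follows. At the base stage take $\mathcal{G}^0_\alpha = 0$ for all $\alpha \le \kappa$; this is a length-$\kappa$ filtration of $\mathcal{F}_0 = 0$ with all factors the zero object, which is the empty direct sum and so lies in $\Sum(\mathcal{C})$. At a successor stage, suppose $(\mathcal{G}^\gamma_\alpha)_{\alpha \le \kappa}$ has already been built as a length-$\kappa$ filtration of $\mathcal{F}_\gamma$. Since $\mid \mathcal{F}_{\gamma+1}/\mathcal{F}_\gamma \mid = \mid C_\gamma \mid < \kappa$, I apply Lemma \ref{l1} with $\mathcal{F} := \mathcal{F}_\gamma$ and $\mathcal{F}' := \mathcal{F}_{\gamma+1}$; this yields a compatible length-$\kappa$ filtration $(\mathcal{G}^{\gamma+1}_\alpha)_{\alpha \le \kappa}$ of $\mathcal{F}_{\gamma+1}$ whose factors coincide, up to isomorphism, with those of $\mathcal{G}^\gamma$ at every index except one $\beta_\gamma < \kappa$, where the factor acquires the direct summand $C_\gamma$, i.e. $\mathcal{G}^{\gamma+1}_{\beta_\gamma+1}/\mathcal{G}^{\gamma+1}_{\beta_\gamma} \cong (\mathcal{G}^\gamma_{\beta_\gamma+1}/\mathcal{G}^\gamma_{\beta_\gamma}) \oplus C_\gamma$. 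At a limit stage $\gamma$ I simply put $\mathcal{G}^\gamma_\alpha := \bigcup_{\gamma' < \gamma} \mathcal{G}^{\gamma'}_\alpha$ for each $\alpha$; interchanging the two unions shows this is again a continuous length-$\kappa$ filtration, and it is one of $\mathcal{F}_\gamma = \bigcup_{\gamma'<\gamma}\mathcal{F}_{\gamma'}$ because $\mathcal{G}^\gamma_\kappa = \bigcup_{\gamma'<\gamma}\mathcal{F}_{\gamma'}$.

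It remains to see that every successive factor of the terminal filtration $\mathcal{G}_\alpha := \mathcal{G}^\lambda_\alpha$ lies in $\Sum(\mathcal{C})$. Fix an index $\alpha < \kappa$. Along the recursion the factor at $\alpha$ changes only at those successor steps $\gamma$ with $\beta_\gamma = \alpha$, and at each such step Lemma \ref{l1} presents the change as the adjunction of a direct summand isomorphic to $C_\gamma$; starting from the zero object at the base stage, the factor at $\alpha$ is therefore the transfinite union of a chain of split inclusions whose successive complements are the $C_\gamma$ with $\beta_\gamma = \alpha$. Consequently $\mathcal{G}_{\alpha+1}/\mathcal{G}_\alpha \cong \bigoplus_{\beta_\gamma = \alpha} C_\gamma \in \Sum(\mathcal{C})$, which completes the argument. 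I expect the delicate point to be precisely this last identification at the limit stages: one must check that the splittings produced by successive applications of Lemma \ref{l1} remain coherent along the chain, so that the accumulated summands genuinely assemble into an internal direct sum rather than merely into an iterated extension. It is here that the ``direct summand with complement $\mathcal{F}'/\mathcal{F}$'' conclusion of Lemma \ref{l1}, together with the regularity of $\kappa$ (which keeps each $\beta_\gamma$ below $\kappa$ and makes the limit-stage unions well behaved), does the essential work.
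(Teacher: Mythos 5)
Your argument is correct and is exactly the proof the paper intends: the paper's own justification is the one-line remark that the corollary ``follows by transfinite induction on the length of the given filtration and by using Lemma \ref{l1}'', and your recursion (Lemma \ref{l1} at successor steps, unions at limit steps, with the factor at each index $\alpha<\kappa$ accumulating as an internal direct sum of the complements $C_\gamma$ supplied by the lemma) is precisely the fleshed-out version of that induction. You also correctly isolate the only delicate point, the coherence of the splittings along limit stages, which is guaranteed because Lemma \ref{l1} realizes the new filtration as $\mathcal{F}_\alpha+\mathcal{S}$ and hence provides canonical, compatible complements.
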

\begin{proof}
It easily follows by  making transfinite induction on the length of the given filtration and by using Lemma \ref{l1}.
\end{proof}
Let $\mathcal{F}' \in \Sum(\mathcal{C})$ with a given direct sum decomposition $\mathcal{F}'=\oplus_{i\in I} \mathcal{N}_i$ such that each $\mathcal{N}_i $ is isomorphic to some object in the class $\mathcal{C}$, 
As defined in \cite{E} for modules, we call a quasi--coherent subsheaf $\mathcal{F}\subseteq \mathcal{F}'$,to be  a nice  subsheaf relative to this direct sum decomposition if $\mathcal{F}=\oplus_{j\in J} \mathcal{N}_j$ for some subset $J \subseteq I$. And $\mathcal{F}$ is a nice subsheaf of $\mathcal{F}' \in \Filt(\Sum(\mathcal{C}))$ if, when we give $\mathcal{F}$ the induced filtration $(\mathcal{F}_\alpha)_{\alpha \leq \sigma}$, the image of the canonical map   $\mathcal{F}_{\alpha+1}/\mathcal{F}_\alpha \rightarrow \mathcal{F}'_{\alpha+1}/\mathcal{F}'_\alpha$ is a nice subsheaf of $\mathcal{F}'_{\alpha+1}/\mathcal{F}'_\alpha$ relative to the given direct sum decomposition of $\mathcal{F}'_{\alpha+1}/\mathcal{F}'_\alpha$ for each $\alpha < \sigma$.
\begin{lemma}\label{l2}
Let $\mathcal{F}$ be a quasi--coherent sheaf and $\mathcal{M} \in \Sum (\mathcal{F})$. Assume that we have a morphism $f: \mathcal {M} \rightarrow \mathcal{N}$ in $\qco(X)$. Then there exist a nice quasi-coherent subsheaf   $\mathcal{T}$ contained in $\Ker (f)$ relative to a direct sum decomposition of $\mathcal{M} \in \Sum(\mathcal{F})$   such that  $\mid \mathcal{M}/ \mathcal{T} \mid \leq \mid \mathcal{F} \mid ^{\mid \Hom(\mathcal{F},\mathcal{N})\mid}$.
\end{lemma}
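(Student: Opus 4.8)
The plan is to adapt the classical module--theoretic argument of Enochs to $\qco(X)$, exploiting that coproducts of quasi--coherent sheaves are computed section--wise over affine open subsets. Fix a direct sum decomposition $\mathcal{M}=\bigoplus_{i\in I}\mathcal{F}_i$ witnessing $\mathcal{M}\in\Sum(\mathcal{F})$, together with isomorphisms $\sigma_i:\mathcal{F}\to\mathcal{F}_i$. Composing $\mathcal{F}\xrightarrow{\sigma_i}\mathcal{F}_i\hookrightarrow\mathcal{M}\xrightarrow{f}\mathcal{N}$ produces, for each $i$, a morphism $\phi_i\in\Hom(\mathcal{F},\mathcal{N})$ in $\qco(X)$. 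I would then partition the index set as $I=\bigsqcup_{\phi}I_\phi$, where $I_\phi=\{\,i\in I : \phi_i=\phi\,\}$; the number of non--empty classes is at most $\mid\Hom(\mathcal{F},\mathcal{N})\mid$.

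The heart of the argument is an anti--diagonal change of decomposition inside each block $\bigoplus_{i\in I_\phi}\mathcal{F}_i$. Choose a representative $i_\phi\in I_\phi$, and for each $j\in I_\phi\setminus\{i_\phi\}$ let $\tilde{\mathcal{F}}_j\subseteq\mathcal{M}$ be the image of the monomorphism $\mathcal{F}\to\mathcal{F}_j\oplus\mathcal{F}_{i_\phi}\hookrightarrow\mathcal{M}$ given on sections by $z\mapsto(\sigma_j(z),-\sigma_{i_\phi}(z))$; being the image of a morphism of quasi--coherent sheaves, $\tilde{\mathcal{F}}_j$ is quasi--coherent and isomorphic to $\mathcal{F}$. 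I claim that
$$\mathcal{M}=\bigoplus_{\phi}\Bigl(\mathcal{F}_{i_\phi}\oplus\bigoplus_{j\in I_\phi\setminus\{i_\phi\}}\tilde{\mathcal{F}}_j\Bigr)$$
is again a direct sum decomposition of $\mathcal{M}$, and that $f$ vanishes on each $\tilde{\mathcal{F}}_j$, since $\phi_j=\phi_{i_\phi}=\phi$ forces $f(\sigma_j(z),-\sigma_{i_\phi}(z))=\phi(z)-\phi(z)=0$. Both assertions are checked over an affine open $U$, where $\mathcal{M}(U)=\bigoplus_i\mathcal{F}_i(U)$ and the recombination reduces to the elementary module fact that, within each block, the anti--diagonal submodules $\tilde{\mathcal{F}}_j(U)$ together with $\mathcal{F}_{i_\phi}(U)$ form an internal direct sum equal to $\bigoplus_{i\in I_\phi}\mathcal{F}_i(U)$; since quasi--coherent sheaves are determined by their affine sections, the decomposition holds in $\qco(X)$.

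I would then set $\mathcal{T}=\bigoplus_{\phi}\bigoplus_{j\in I_\phi\setminus\{i_\phi\}}\tilde{\mathcal{F}}_j$. By construction $\mathcal{T}$ is a sub--coproduct of the new decomposition, hence a nice quasi--coherent subsheaf of $\mathcal{M}$ relative to it, and it is contained in $\Ker(f)$ since each summand is. Finally, $\mathcal{M}/\mathcal{T}\cong\bigoplus_{\phi}\mathcal{F}_{i_\phi}$ is a coproduct of at most $\mid\Hom(\mathcal{F},\mathcal{N})\mid$ copies of $\mathcal{F}$. Evaluating on each affine open $U$ and embedding the coproduct into the corresponding product yields $\mid(\mathcal{M}/\mathcal{T})(U)\mid\leq\mid\mathcal{F}(U)\mid^{\mid\Hom(\mathcal{F},\mathcal{N})\mid}$, and taking the supremum over $U$ gives the desired bound $\mid\mathcal{M}/\mathcal{T}\mid\leq\mid\mathcal{F}\mid^{\mid\Hom(\mathcal{F},\mathcal{N})\mid}$.

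The routine parts are the cardinal arithmetic and the verification on affines; the one step deserving care is checking that the anti--diagonal families genuinely recombine into a direct sum decomposition of $\mathcal{M}$ in $\qco(X)$ — that is, that passing from $\bigoplus_i\mathcal{F}_i$ to $\mathcal{F}_{i_\phi}\oplus\bigoplus_j\tilde{\mathcal{F}}_j$ is an automorphism of $\mathcal{M}$. This is exactly where I rely on coproducts being computed section--wise over affine opens, so that the global claim follows from its module counterpart, and it is the main (though only mildly delicate) obstacle.
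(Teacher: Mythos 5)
Your proposal is correct and follows essentially the same route as the paper's own proof: you partition the index set by the induced morphisms $\mathcal{F}\to\mathcal{N}$ (the paper's equivalence relation $i\sim j$ iff $f_i=f_j$), form the same anti-diagonal quasi--coherent subsheaves killed by $f$, verify the recombined direct sum decomposition section--wise over affines (the paper exhibits an explicit splitting), and obtain the same cardinality bound from $\mathcal{M}/\mathcal{T}\cong\bigoplus_{j\in J}\mathcal{F}_j$ with $\mid J\mid\leq\mid\Hom(\mathcal{F},\mathcal{N})\mid$. No gaps.
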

\begin{proof}
Note that $\mathcal{M} = \oplus_{i \in I} \mathcal{F}_i$, where $\mathcal{F}_i =\mathcal{F}$ for each $i \in I$. Then  the morphism $f$ is of the form $(f_i)_{i\in I}$, where $f_i :\mathcal{F}_i \rightarrow \mathcal{N}$ for each $i \in I$. Now, we define an equivalence relation on $I$  for the fixed morphism $f=(f_i)_{i\in I}$  as follows: $i \sim j$ if and only if $f_i=f_j$. If $J$ is the subset of $I$ which represents the equivalence classes, we define the subpresheaf $\mathcal {F}'_i$ of $\mathcal{M}$ for each $i \in I \backslash J $ with $j \in J$ and $i \sim j$  such that $\mathcal{F}'_i(U)$ includes datas, on each open subset $U \subseteq X$,
  having  $x\in \mathcal{F}(U)$ in the $i$'th component, $-x$ in the $j$'th one and $0$ in the others. This subpresheaf is isomorphic to $\mathcal{F}_i(U)$. So it is a quasi-coherent sheaf which is isomorphic to $\mathcal{F}_i$. And also $f_U(\mathcal{F}'_i)=0$ for each $i\in I\backslash J$ and for each  affine open subset $U \subseteq X$. Then $\mathcal{T}:=\oplus_{i \in I \backslash J} \mathcal{F}'_i \subseteq \Ker f$.  The exact sequence
 $$\xymatrix{0 \ar[r] & (\oplus_{i \in I\backslash J} \mathcal{F}'_i)(U)\ar@{^{(}->}[r] &(\mathcal{M})(U) = (\oplus_{i \in I} \mathcal{F}_i)(U) \ar[r]^-{h_U} & (\oplus_{i \in J} \mathcal{F}_i)\ar[r]&0},$$
 where the map $h_U((x_i)_{i\in I})=(y_j)_{j\in J}$, $y_j=\sum_{\substack{i \in I\\i \sim j }} x_i$, is splitting since the map $t$ which is defined on each affine subset $U\subseteq X$ as $t_U: \mathcal{M}(U) \rightarrow (\oplus_{i \in I\backslash J} \mathcal{F}'_i)(U)$, $t_U(x_i)_{i \in I}=(y_i)_{i\in I}$ such that $y_i=x_i$ for each $i\in I$ and $y_j= \sum _{\substack{i\in I \backslash J\\i \sim j}}x_i$ for each $j \in J$ is compatible with restriction maps and gives us the identity map when composed with the inclusion map from $(\oplus_{i \in I\backslash J} \mathcal{F}'_i)(U)$. That means, $\mathcal{M} \cong (\oplus_{i \in J} \mathcal{F}_i) \oplus (\oplus_{i \in I\backslash J} \mathcal{F}'_i)$. So, the quasi--coherent subsheaf $\mathcal{T}=\oplus_{i \in I\backslash J} \mathcal{F}'_i$ is nice in $\mathcal{M}$ relative to some direct sum decomposition in $\Sum(\mathcal{F})$. Finally, $\mid J \mid \leq \mid \Hom(\mathcal{F},\mathcal{N})\mid$ implies that $\mid \oplus_{i \in J} \mathcal{F}_i \mid=\mid \mathcal{M}/ \mathcal{T} \mid \leq \mid \mathcal{F} \mid ^{\mid \Hom(\mathcal{F},\mathcal{N})\mid}$.
 \end{proof}
\begin{lemma}\label{filt3}
If $\mathcal{M} \in \Sum(\mathcal{F})$ and $\varepsilon:= 0 \rightarrow \mathcal{N} \rightarrow \mathcal{T} \rightarrow \mathcal{M}\rightarrow 0$ is an exact sequence in $\qco(X)$, then it is isomorphic to an exact sequence
$0 \rightarrow \mathcal{N} \rightarrow \mathcal{T}' \oplus \mathcal{V} \rightarrow \mathcal{M}' \oplus \mathcal{V}\rightarrow 0$ where $\mathcal{V}$ is a nice subsheaf of $\mathcal{M}$ and
$\mid \mathcal{M}/\mathcal{V}\mid \leq \mid \mathcal{F} \mid ^{\mid \Ext(\mathcal{F},\mathcal{N})\mid}$
\end{lemma}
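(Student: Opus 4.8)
The plan is to run the argument of \cite{E} with $\Ext$ in place of the $\Hom$ used in Lemma \ref{l2}. Write $\mathcal{M}=\bigoplus_{i\in I}\mathcal{F}_i$ with $\mathcal{F}_i=\mathcal{F}$ for all $i$, and let $\iota_i\colon\mathcal{F}_i\hookrightarrow\mathcal{M}$ be the canonical inclusions. For each $i\in I$ I would pull back the extension class $[\varepsilon]$ along $\iota_i$ to obtain $\xi_i:=\iota_i^{\ast}[\varepsilon]\in\Ext(\mathcal{F},\mathcal{N})$, and then define an equivalence relation on $I$ exactly as in Lemma \ref{l2}, but now declaring $i\sim j$ if and only if $\xi_i=\xi_j$. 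Choosing a set $J\subseteq I$ of representatives of the equivalence classes gives $\lvert J\rvert\le\lvert\Ext(\mathcal{F},\mathcal{N})\rvert$.

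Next I would produce the splitting $\mathcal{M}=\mathcal{M}'\oplus\mathcal{V}$. For $i\in I\setminus J$ with representative $j\in J$ ($i\sim j$), let $\mathcal{V}_i$ be the antidiagonal quasi--coherent subsheaf defined word for word as the subsheaf $\mathcal{F}'_i$ in Lemma \ref{l2}, i.e. the image of the monomorphism $\iota_i-\iota_j\colon\mathcal{F}\to\mathcal{M}$; it is quasi--coherent and isomorphic to $\mathcal{F}$. Setting $\mathcal{M}':=\bigoplus_{j\in J}\mathcal{F}_j$ and $\mathcal{V}:=\bigoplus_{i\in I\setminus J}\mathcal{V}_i$, the very splitting map $t_U$ built in Lemma \ref{l2} shows that $\mathcal{M}=\mathcal{M}'\oplus\mathcal{V}$ and that $\mathcal{V}$ is a nice subsheaf of $\mathcal{M}$ relative to this decomposition. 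This step is identical to Lemma \ref{l2} and needs no new idea.

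The genuinely new point is that the restriction of $\varepsilon$ to $\mathcal{V}$ splits. By contravariance and additivity of $\Ext(-,\mathcal{N})$, the pullback of $[\varepsilon]$ along the defining monomorphism $\iota_i-\iota_j$ of $\mathcal{V}_i$ equals $\xi_i-\xi_j=0$; hence $\varepsilon$ admits a section $s_i\colon\mathcal{V}_i\to\mathcal{T}$ over each $\mathcal{V}_i$, and by the universal property of the coproduct these assemble into a single section $s\colon\mathcal{V}\to\mathcal{T}$ of the epimorphism $p\colon\mathcal{T}\to\mathcal{M}$ over $\mathcal{V}$. Putting $\mathcal{T}':=p^{-1}(\mathcal{M}')$, I would then check that $\mathcal{T}=\mathcal{T}'\oplus s(\mathcal{V})$: for $t\in\mathcal{T}$ with $p(t)=m'+v$ ($m'\in\mathcal{M}'$, $v\in\mathcal{V}$) one has $t-s(v)\in\mathcal{T}'$, while $\mathcal{T}'\cap s(\mathcal{V})=0$ because $p$ is injective on $s(\mathcal{V})$ with image $\mathcal{V}\cap\mathcal{M}'=0$. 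Under this splitting $\varepsilon$ is carried isomorphically onto the direct sum of $\varepsilon'\colon 0\to\mathcal{N}\to\mathcal{T}'\to\mathcal{M}'\to0$ (the pullback of $\varepsilon$ along $\mathcal{M}'\hookrightarrow\mathcal{M}$) and the split sequence $0\to0\to\mathcal{V}\to\mathcal{V}\to0$, which is precisely the asserted form $0\to\mathcal{N}\to\mathcal{T}'\oplus\mathcal{V}\to\mathcal{M}'\oplus\mathcal{V}\to0$. Finally $\mathcal{M}/\mathcal{V}\cong\mathcal{M}'=\bigoplus_{j\in J}\mathcal{F}_j$, so evaluating on affine opens gives $\lvert\mathcal{M}/\mathcal{V}\rvert\le\lvert\mathcal{F}\rvert^{\lvert J\rvert}\le\lvert\mathcal{F}\rvert^{\lvert\Ext(\mathcal{F},\mathcal{N})\rvert}$.

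The main obstacle I anticipate is not the combinatorics but checking that all of the homological bookkeeping stays inside $\qco(X)$ and can be performed concretely on affine opens, as the rest of the section does. Concretely, one must know that the relevant $\Ext$ group is computed by quasi--coherent Yoneda extensions, so that the pullbacks $\iota_i^{\ast}$, the additivity $(\iota_i-\iota_j)^{\ast}=\xi_i-\xi_j$, the sections $s_i$ and the resulting decomposition $\mathcal{T}=\mathcal{T}'\oplus s(\mathcal{V})$ are all morphisms of quasi--coherent sheaves rather than merely of $\os_X$--modules in $\mathfrak{Sh}(X)$. Since $\qco(X)$ is a Grothendieck category closed under kernels, cokernels and extensions inside $\mathfrak{Sh}(X)$, each of these constructions does remain quasi--coherent, and the verification on sections over affine open subsets proceeds exactly as in Lemma \ref{l2}.
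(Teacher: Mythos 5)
Your proposal is correct and follows essentially the same route as the paper: the same decomposition $\mathcal{M}=\bigoplus_{i\in I}\mathcal{F}_i$, the same equivalence relation ($i\sim j$ iff the pulled-back extensions over $\mathcal{F}_i$ and $\mathcal{F}_j$ coincide), the same antidiagonal summands $\mathcal{V}_i$, and the same splitting of $\varepsilon$ over $\mathcal{V}=\bigoplus_{i\in I\setminus J}\mathcal{V}_i$. The only difference is presentational: where you invoke additivity of $\Ext(-,\mathcal{N})$ to see that the pullback along $\iota_i-\iota_j$ vanishes, the paper exhibits the explicit section $\sigma_U(a_U+b_U)=(h_{ij})_U(a_U)+b_U$ on affine opens, which is the same fact made concrete.
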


\begin{proof}
Suppose $\mathcal{M}=\oplus_{i \in I} \mathcal{F}_i$ where $\mathcal{F}_i=\mathcal{F}$ for all $i \in I$. Then for each $i\in I$, we can consider the quasi--coherent sheaf associated to the one defined on affine open subsets $U \subseteq X$ as $f^{-1}(\mathcal{F}_i)(U)=f^{-1}_U (\mathcal{F}_i(U))$ with the map $f_U :  f^{-1}_U (\mathcal{F}_i(U)) \rightarrow \mathcal{F}_i$. Since it satisfies the condition of quasi--coherence on affine open subsets and commutativity on affine open subset inclusion, it is possible to find such a quasi--coherent sheaf and a commutative diagram
$$\xymatrix{0\ar[r] & \mathcal{N}\ar[r]\ar[d] & f^{-1}(\mathcal{F}_i)\ar[r]\ar@{^{(}->}[d] &\mathcal{F}_i \ar[r]\ar@{^{(}->}[d] &0\\
 0\ar[r] & \mathcal{N}\ar[r] & \mathcal{T}\ar[r] &\oplus_{i\in I}\mathcal{F}_i \ar[r] &0}.$$
Now, as done before, we define a equivalence relation on $I$ as $i \sim j$ if and only if there exists a commutative diagram morphism
$$\xymatrix{0\ar[r] & \mathcal{N}\ar[r]\ar[d]_{\id} & f^{-1}(\mathcal{F}_i)\ar[r]\ar[d]_{h_{ij}} &\mathcal{F}_i=\mathcal{F} \ar[r]\ar[d]_{\id} &0\\
 0\ar[r] & \mathcal{N}\ar[r] & f^{-1}(\mathcal{F}_i)\ar[r] &\mathcal{F}_j=\mathcal{F} \ar[r] &0}.$$
Consider the set $J$ of representatives of equivalence classes. Then, for each $i \in I \backslash J$ we  define a quasi--coherent subsheaf $\mathcal{V}_i$ of $\mathcal{M}$ with  properties on affine open subsets $U \subseteq X$ that  $\mathcal{V}_i(U)$ consist of elements from $\mathcal{M}=\oplus_I \mathcal{F}_i$ having $c_U$ in $i$'th and $-c_U$ in $j$'th component, where $c_u \in \mathcal{F}(U)$  and $0$ for others and where $j \in J$ with $j \sim i$. It is easy to see that $\mathcal{V}_i$ is isomorphic to $\mathcal{F}_i$. In fact, for a fixed $i_0 \in I \backslash J$, if we define a map $t_U: (\oplus_I \mathcal{F}_i)(U)=\oplus_I \mathcal{F}_i(U) \rightarrow \mathcal{V}_{i_0}(U)$ such that $t_U((c_U^i)_{i \in I})=(c'^i_U)_{i \in I}$ where $c'^{i_0}_U=c^{i_0}_U$, $c'^j_U=-c'^{i_0}_U$, where $j\in J$ and $j \sim i_0$, and $c'^i_U=0$. Then, $t_u \circ i_u=\id$ for each affine set $U$. This map is compatible with the restriction map on all affine open subsets $V \subseteq U \subseteq X$. So it can be extended to $\oplus_I\mathcal{F}_i$. Finally, $\mathcal{V}_{i_0}$ is a direct summand of $\oplus_I\mathcal{C}_i$, for each $i_0\in I$.

Now, we define a quasi--coherent sheaf $f^{-1}(\mathcal{V}_i)$ for each $i \in I \backslash J$ with $j \sim i$ and $j \in J$ such that $$(f^{-1}(\mathcal{V}_i))(U):=$$ $$f_U^{-1}(\mathcal{V}_i)=\{a_u + b_u \mid \textrm{ } a_u \in f^{-1}_U(\mathcal{F}_i(U)), b_u \in f^{-1}_U(\mathcal{F}_j(U)) \textrm{ and } -\pi_i f_U(a_u)=\pi_j f_U (b_u)  \}.$$
We have a commutative diagram with exact rows for each affine $U$ and $i \in I \backslash J$
$$\xymatrix{0\ar[r] & \mathcal{N}(U)\ar[r]^-{\iota_u}\ar[d] & (f^{-1}(\mathcal{V}_i))(U)\ar[r]^-f\ar@{^{(}->}[d] &\mathcal{V}_i(U) \ar[r]\ar@{^{(}->}[d] &0\\
 0\ar[r] & \mathcal{N}(U)\ar[r] & \mathcal{T}(U)\ar[r] &\oplus_{i\in I}\mathcal{F}_i(U) \ar[r] &0}.$$
With the morphism on affine set $U$ given by $\sigma_U:(f^{-1}(\mathcal{V}_i))(U) \rightarrow \mathcal{N}(U)$, $\sigma_U(a_U+b_U):=(h_{ij})_U(a_U)+b_U$, which  is compatible with restriction maps, we have $\sigma_U \circ \iota_U=\id_{\mathcal{N}(U)}$, that is, the first row is splitting for each affine set $U$. Then  $\mathcal{V}_i$ has an isomorphic image which is a direct summand in $f^{-1}(V_i)$. Since $\mathcal{V}_i$ is also a direct summand in $\mathcal{M}$, we can deduce that $\mathcal{V}_i$ has an isomorphic image in $\mathcal{T}$, which is a direct summand. Combining all of them and considering the quasi--coherent subsheaf $\mathcal{V}:=\oplus_{i \in I \backslash J}\mathcal(V)_i$ of $\mathcal{M}$, we identify it with its isomorphic image in $\mathcal{T}$. So, the original exact sequence is reduced to the desired one. And also $\mathcal{M} / \mathcal{V} \simeq \oplus_{j \in J}\mathcal{F}_j$, the claim on the cardinality follows.

\end{proof}
Returning to our case of the class of locally  torsion-free quasi-coherent sheaves, we can combine all previous results to infer the following
\begin{lemma}\label{filt4}
Let $\lambda$ be a cardinal. There is a cardinal $\mu$ such that for each morphism $f: \mathcal{M}\rightarrow \mathcal{N}$ where $\mathcal{M} \in \Filt (S)$ and $\mid \mathcal{N}\mid \leq \lambda$ there is a quasi--coherent subsheaf $\mathcal{T}$ of $\mathcal{M}$ contained in $\Ker f$ such that $\mathcal{F}/\mathcal{T} \in  \Filt(\mathcal{S})$ and $\mid \mathcal{F}/ \mathcal{T}\mid \leq \mu$.
\end{lemma}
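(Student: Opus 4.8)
The plan is to run an Enochs-style transfinite induction along a filtration of $\mathcal{M}$, at each successor stage peeling off a \emph{nice} subsheaf of the filtration factor that lands in $\Ker f$ while controlling the cardinality of what remains, and to read off $\mu$ from the per-factor bounds produced by Lemmas \ref{l2} and \ref{filt3}. (Here the quotient in the statement should of course be read as $\mathcal{M}/\mathcal{T}$.)

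First I would normalize the filtration. Since $\mathcal{M}\in\Filt(S)$ and every object of $S$ has cardinality $<\kappa$ (as type $\kappa$ is equivalent to $\mid\cdot\mid<\kappa$), Corollary \ref{filt2} lets me assume that $\mathcal{M}$ carries a $\Sum(S)$-filtration $(\mathcal{M}_\alpha)_{\alpha\leq\kappa}$ of length $\kappa$ with each factor $\mathcal{M}_{\alpha+1}/\mathcal{M}_\alpha\in\Sum(S)$. Grouping the summands by isomorphism type, I may write each factor as $\bigoplus_{\mathcal{G}\in S}\mathcal{G}^{(I_{\mathcal{G}})}$, so that Lemmas \ref{l2} and \ref{filt3}, which are phrased for $\Sum(\mathcal{F})$ with a single $\mathcal{F}$, apply to each isotypic block. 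Next I fix $\mu$: for $\mathcal{G}\in S$ we have $\mid\mathcal{G}\mid<\kappa$ and $\mid\mathcal{N}\mid\leq\lambda$, so $\mid\Hom(\mathcal{G},\mathcal{N})\mid$ and $\mid\Ext(\mathcal{G},\mathcal{N})\mid$ are bounded by a cardinal depending only on $\kappa$, $\lambda$ and $\mid S\mid$; hence the quantities $\mid\mathcal{G}\mid^{\mid\Hom(\mathcal{G},\mathcal{N})\mid}$ and $\mid\mathcal{G}\mid^{\mid\Ext(\mathcal{G},\mathcal{N})\mid}$ of Lemmas \ref{l2} and \ref{filt3} are all $\leq$ a single cardinal $\nu=\nu(\lambda)$. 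I set $\mu:=\nu^{\kappa}$, which bounds the cardinality of any quasi-coherent sheaf carrying a filtration of length $\kappa$ whose factors have cardinality $\leq\nu$ (by the usual induction on sections: $\mid\mathcal{M}(U)\mid\leq\prod_{\alpha<\kappa}\nu\leq\nu^\kappa$).

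Then comes the induction. I build an increasing continuous chain $(\mathcal{T}_\alpha)_{\alpha\leq\kappa}$ with $\mathcal{T}_\alpha\subseteq\mathcal{M}_\alpha\cap\Ker f$, each $\mathcal{T}_\alpha$ nice in $\mathcal{M}_\alpha$, such that $\mathcal{M}_\alpha/\mathcal{T}_\alpha\in\Filt(S)$ and each successor factor $(\mathcal{M}_{\alpha+1}/\mathcal{T}_{\alpha+1})/(\mathcal{M}_\alpha/\mathcal{T}_\alpha)$ lies in $\Sum(S)$ with cardinality $\leq\nu$. At a successor step I have $\mathcal{T}_\alpha$ and the induced map $\bar f$ on $\mathcal{M}_{\alpha+1}/\mathcal{T}_\alpha$; applying Lemma \ref{filt3} to the extension $0\to\mathcal{M}_\alpha/\mathcal{T}_\alpha\to\mathcal{M}_{\alpha+1}/\mathcal{T}_\alpha\to\mathcal{M}_{\alpha+1}/\mathcal{M}_\alpha\to0$ and Lemma \ref{l2} to the resulting split-off copy in $\Sum(S)$, I extract a nice subsheaf of the factor that can be lifted into $\Ker\bar f$; pulling it back and enlarging $\mathcal{T}_\alpha$ accordingly defines $\mathcal{T}_{\alpha+1}$, with quotient factor in $\Sum(S)$ of cardinality $\leq\nu$. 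At a limit ordinal I take $\mathcal{T}_\beta=\varinjlim_{\alpha<\beta}\mathcal{T}_\alpha$; niceness and membership in $\Filt(S)$ survive because $\mathscr{F}=\Filt(S)$ is closed under direct limits (Lemma \ref{filt1}, Corollary \ref{tors}). Setting $\mathcal{T}:=\mathcal{T}_\kappa\subseteq\Ker f$, the quotient $\mathcal{M}/\mathcal{T}$ inherits a filtration of length $\kappa$ with factors in $\Sum(S)\subseteq\mathscr{F}$, so $\mathcal{M}/\mathcal{T}\in\mathscr{F}=\Filt(S)$ by closure under extensions and direct limits, and $\mid\mathcal{M}/\mathcal{T}\mid\leq\nu^{\kappa}=\mu$.

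The hard part will be the successor step: one must verify that the nice subsheaf of $\mathcal{M}_{\alpha+1}/\mathcal{M}_\alpha$ produced by Lemmas \ref{l2} and \ref{filt3} genuinely pulls back to a nice subsheaf $\mathcal{T}_{\alpha+1}$ of $\mathcal{M}_{\alpha+1}$ that is simultaneously contained in $\Ker f$ and compatible with $\mathcal{T}_\alpha$, all carried out at the level of sections over affine opens and compatibly with the restriction maps. Ensuring this compatibility, so that the assignment $U\mapsto\mathcal{T}_{\alpha+1}(U)$ really assembles into a quasi-coherent subsheaf and so that niceness is taken relative to one coherent direct-sum decomposition throughout the whole chain, is the delicate bookkeeping; the cardinality estimate is then routine cardinal arithmetic.
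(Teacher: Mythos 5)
Your proposal follows essentially the same route as the paper: the paper's own proof is just a sketch that invokes Corollary \ref{filt2} to reduce to a length-$\kappa$ $\Sum(S)$-filtration and then runs the transfinite induction of Enochs' Theorem 5.1 using Lemmas \ref{l2} and \ref{filt3} at the successor steps, which is precisely the argument you spell out (including reading off $\mu$ from the $\Hom$/$\Ext$ cardinality bounds). Your version is in fact more detailed than the paper's, and correctly identifies the delicate point (compatibility of the nice subsheaves along the chain) that the paper delegates to the cited reference.
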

\begin{proof}
Using Corollary  \ref{filt2}, Lemma \ref{l2} and \ref{filt3}, we can apply a transfinite induction on $\kappa$ mentioned in Corollary \ref{filt2} to find a cardinal as done in \citet[Theorem 5.1]{E}.
\end{proof}

\begin{theorem}\label{extfc}
Each quasi-coherent sheaf over $X$ has an $\mathscr{F}$-cover.
\end{theorem}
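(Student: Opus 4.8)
The plan is to follow the two-step pattern of Enochs' theory of covers: first produce, for every $\mathcal{M}\in\qco(X)$, an $\mathscr{F}$-precover, and then upgrade it to a cover using that $\mathscr{F}$ is closed under direct limits. Recall from Corollary~\ref{tors} that $\mathscr{F}$ is closed under subobjects, extensions, products and direct limits. Since a finite direct sum is an iterated extension and an arbitrary coproduct is the direct limit of its finite partial sums, $\mathscr{F}$ is then also closed under arbitrary coproducts; and since a direct summand is in particular a subobject, $\mathscr{F}$ is closed under direct summands. These are the only closure properties the construction will use.

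For the precovering step I fix $\mathcal{M}\in\qco(X)$ and set $\lambda=\mid\mathcal{M}\mid$. Lemma~\ref{filt4} provides a cardinal $\mu$, depending only on $\lambda$, such that every morphism $g\colon\mathcal{G}\to\mathcal{M}$ with $\mathcal{G}\in\mathscr{F}=\Filt(S)$ factors as $\mathcal{G}\twoheadrightarrow\mathcal{G}/\mathcal{T}\to\mathcal{M}$ with $\mathcal{T}\subseteq\Ker g$, $\mathcal{G}/\mathcal{T}\in\mathscr{F}$ and $\mid\mathcal{G}/\mathcal{T}\mid\le\mu$. Let $\{\mathcal{G}_s\}_{s\in\Sigma}$ be a set of representatives of the isomorphism classes of objects of $\mathscr{F}$ of cardinality at most $\mu$ (this is a set, thanks to the bounds on $\mid\os_X(U)\mid$ and on the restriction maps fixed before Lemma~\ref{filt1}), and form
$$\phi\colon\mathcal{P}:=\bigoplus_{s\in\Sigma}\ \bigoplus_{h\in\Hom(\mathcal{G}_s,\mathcal{M})}\mathcal{G}_s\longrightarrow\mathcal{M},$$
where $\phi$ agrees with $h$ on the copy of $\mathcal{G}_s$ indexed by $h$. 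Then $\mathcal{P}\in\mathscr{F}$ because $\mathscr{F}$ is closed under coproducts, and $\phi$ is an $\mathscr{F}$-precover: given $\mathcal{G}\in\mathscr{F}$ and $g\colon\mathcal{G}\to\mathcal{M}$, the factorization above realizes $g$ through the summand indexed by the pair consisting of $\mathcal{G}/\mathcal{T}\cong\mathcal{G}_s$ and the induced map $\bar g$, so $g$ lifts along $\phi$. Hence every object of $\qco(X)$ has an $\mathscr{F}$-precover.

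It remains to pass from precovers to covers, and here I would invoke the general fact, established in \cite{E}, that a precovering class which is closed under direct limits is covering; this is exactly the portion of the argument of \cite{E} that survives once the cardinality bound of Lemma~\ref{filt4} has been secured. Concretely, starting from the precover $\phi\colon\mathcal{P}\to\mathcal{M}$ one removes its redundancies: if $\phi$ is not already a cover there is $f\colon\mathcal{P}\to\mathcal{P}$ with $\phi\circ f=\phi$ that fails to be an automorphism, and forming the direct limit of the tower $\mathcal{P}\xrightarrow{f}\mathcal{P}\xrightarrow{f}\cdots$ (which remains in $\mathscr{F}$ by closure under direct limits), together with a transfinite iteration over all such compatible endomorphisms, produces an $\mathscr{F}$-precover on which every endomorphism over $\mathcal{M}$ is invertible, that is, an $\mathscr{F}$-cover of $\mathcal{M}$.

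The main obstacle is the uniform cardinality bound of Lemma~\ref{filt4}, which is what makes the small objects $\{\mathcal{G}_s\}_{s\in\Sigma}$ a solution set for morphisms into $\mathcal{M}$; assembling it from Corollary~\ref{filt2}, Lemma~\ref{l2} and Lemma~\ref{filt3} is the technical heart of the matter, and is already in place. The only remaining point demanding care is that Enochs' passage from a precover to a cover, originally written for modules, uses nothing beyond direct limits, coproducts, kernels and the solution-set bound, all of which are available in the Grothendieck category $\qco(X)$; so that argument transfers without essential change, and the theorem follows.
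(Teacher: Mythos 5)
Your argument is correct and follows essentially the same route as the paper: Lemma~\ref{filt4} (with Lemma~\ref{filt1}) supplies the cardinal bound $\mu$, the coproduct over all type-$\mu$ objects of $\mathscr{F}$ and all morphisms into $\mathcal{M}$ gives the $\mathscr{F}$-precover, and closure of $\mathscr{F}$ under direct limits upgrades it to a cover via the standard Enochs/Xu theorem (the paper cites \cite[Theorem 2.2.12]{Xu}, valid in any Grothendieck category). No gaps.
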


\begin{proof}
Let $\mathcal{N}$ be a quasi-coherent sheaf and $\lambda$ be the cardinality of $\mathcal{N}$. By Lemmas \ref{filt1} and \ref{filt4}, there is a cardinal $\mu$ such that, for each morphism $f: \mathcal{F} \rightarrow \mathcal{N}$ where $\mathcal{F} \in \mathscr{F}$, there is  a pure submodule $\mathcal{T}$ of $\mathcal{F}$ contained in $\Ker f$ with $\mathcal{F}/\mathcal{T} \in \mathscr{F}$ of type $\mu$. Then $\mathcal{M}:=\oplus_{\mathcal{T} \in S' \\ h:\mathcal{T}\rightarrow \mathcal{N}} \mathcal{T}_h$, with the canonical morphism $\sigma : \mathcal{M} \rightarrow \mathcal{N} $, is a precover of $\mathcal{N}$ (where $S'$ is the isomorphism class of locally torsion-free quasi-coherent sheaves of type $\mu$). Finally, since $\mathscr{F}$ is closed under direct limits, by \cite[Theorem 2.2.12]{Xu} (whose proof is valid
for any Grothendieck category, so in particular for $\qco(X)$) $\mathcal{M}$ has an $\mathscr{F}$-cover.
\end{proof}

\bigskip\par\noindent {\bf Remark.} Theorem \ref{extfc} may be also derived from Lemma \ref{filt1} and \cite[Corollary 2.15]{Saorin}.


\begin{thebibliography}{00}
\bibliographystyle{apalike}
\addcontentsline{toc}{chapter}{REFERENCES}
\thispagestyle{myheadings}


\bibitem[Ad\'{a}mek \& Rosick\'{y}, 1994]{adamek} Ad\'{a}mek, J., \& Rosick\'{y}, J. (1996). Locally presentable and accessible categories. \emph {London Mathematical Society Lecture Note Series,  189}. Cambridge: Cambridge University Press.



\bibitem[Bass, 1960]{bass} Bass, H. (1960). Finitistic dimension and a homological generalization of semiprimary rings.  \emph{Trans. Amer. Math. Soc., 95}, 466-488.


\bibitem[Bueso et al., 1991]{blas} Bueso, J.L., Torrecillas, B. \& Verschoren A. (1991).
Generalized Local Cohomology and
Quasicoherent Sheaves. \emph{J. Algebra, 138}, 298-312.

\bibitem[Conrad, 2000]{conrad} Conrad, B. (2000). Grothendieck Duality and Base Change. Berlin: Springer, Lecture Notes in Mathematics, vol. 1750.


\bibitem[Crawley-Boevey, 1994]{cb} Crawley-Boevey, W. (1994). Locally finitely presented additive categories. \emph{ Comm. Algebra, 22}, 1641–1674.

\bibitem[Dickson, 1966]{dickson} Dickson, S.E. (1966). A torsion theory for abelian categories. \emph{Trans. Amer. Math. Soc., 121}, 223-235.

\bibitem[Enochs, 2012]{E} Enochs, E. (2012) Shortening filtrations. \emph{Science China Math., 55}, 687-693.


\bibitem[Enochs, 1963]{torsionfree} Enochs, E. (1963). Torsion-free covering. \emph{Proc. Amer. Math. Soc., 14}, 884-889.
\bibitem[Enochs \& Estrada, 2005]{relativehom} Enochs, E., \& Estrada, S. (2005). Relative homological algebra in the category of quasi-coherent sheaves. \emph{Advances in Mathematics, 194}, 284-295.
%\bibitem[Enochs et al., 2004]{flat} Enochs, E., Estrada, S., %Garc\'{\i}a Rozas, J.R., \& Oyonarte, L. (2004a). Flat covers in %the category of quasi-coherent sheaves over the projective Line. %\emph{Communications in Algebra, 32}, 1497-1508.

%\bibitem[Enochs et al., 2003]{generalized} Enochs, E., %Estrada, S., Garc\'{\i}a Rozas, J.R., \& Oyonarte,
%L. (2003). Generalized quasi-coherent sheaves. \emph{Journal %of
%Algebra and Its Applications, 2}(1), 1-21.

\bibitem[Estrada et al., 2012]{EGPT} Estrada, S., Guil Asensio, P. A., Prest, M. \& , Trlifaj J. (2012). Model category structures arising from Drinfeld vector bundles. \emph{Advances Math., 231}, 1417-1438.

\bibitem[Estrada \& Saorin, 2013]{ES} Estrada, S., Saor\'{\i}n, M. Locally finitely presented categories with no flat objects. \emph{Forum. Math.}, to appear. Available at arXiv:1204.5681.

\bibitem[Gillespie, 2007]{Gillespie}Gillespie, J. (2007). Kaplansky classes and derived categories. \emph{ Math. Z., 257}, 811–843.

\bibitem[Golan \& Teply, 1973]{teply}Golan, J.S., \& Teply, M.L. (1973). Torsion-free covers. \emph{Israel J. Math., 15}, 237-256.

\bibitem[Grothendieck, 1957]{Grothendieck} Grothendieck, A. (1957). Sur quelques points d\'alg\`ebre homologique.\emph{ T\^ohoku Math. J. 9}, 119–221.

\bibitem[Hosseini \& Salarian, 2012]{HosSal} Hosseini, E., \& Salarian, S. (2012). A cotorsion theory in the homotopy category of flat quasi-coherent sheaves. \emph{Proc. Amer. Math. Soc.,} \url{http://dx.doi.org/10.1090/S0002-9939-2012-11364-4}.

\bibitem[Hovey, 2001]{Hovey}Hovey, M. (2001). Model category structures on chain complexes of sheaves, \emph{ Trans. Amer. Math. Soc. 353}, 2441–2457.

\bibitem[MacLane, 1971]{MacL}MacLane, S. (1971). Categories for the Working Mathematician, \emph{Springer-Verlag}.


\bibitem[Murfet, 2007]{Murfet} Murfet, D. The Mock homotopy category of
projectives and Grothendieck duality, PhD thesis, Australian National
University, 2007, available at
\url{http://www.therisingsea.org/thesis.pdf}.

\bibitem[Murfet \& Salarian, 2011]{MurSal} Murfet, D.,\&  Salarian, S. (2011). Totally acyclic complexes over noetherian schemes, \emph{Advances
Math., 296}, 1096–1133.

\bibitem[Saor\'{\i}n et al., 2000]{SRV} Rada, J.,\& Saor\'{\i}n, M., del Valle, A. (2000), Reflective subcategories, \emph{Glasgow Math. J. 42}, 97-113. 

\bibitem[Saor\'{\i}n  \& \v{S}\v{t}ov\'{\i}\v{c}ek, 2011]{Saorin} Saor\'{\i}n, M., \& \v{S}\v{t}ov\'{\i}\v{c}ek, J. (2011). On exact categories and applications to triangulated categories. \emph{Advances Math., 228}, 968-1007.

\bibitem[Stenstrom, 1975]{bo} Stenstrom, B. (1975). Ring of quotients. Berlin Heidelberg New York: Springer-Verlag.

\bibitem[Xu, 1996]{Xu} Xu, J. (1996). Flat covers of modules. \emph{Lecture Notes in Mathematics, 1634}. Berlin:
Springer-Verlag.

\end{thebibliography}
\end{document}